\theoremstyle{plain}
\newtheorem{theo}{Theorem}[section]}
\theoremstyle{plain}
\newtheorem{coro}[theo]{Corollary}}
\newtheorem{lemm}[theo]{Lemma}}
\newtheorem{prop}[theo]{Proposition}}
\newtheorem{itheo}{Theorem}}
\newtheorem{defi}[theo]{Definition}}
\theoremstyle{plain}
\theoremstyle{plain}
\newtheorem{rema}[theo]{Remark}}
\newtheorem*{proof}{Proof}}
\newcommand{\ProofEnd}{\hfill $\Box$}
\DeclareMathOperator{\Reg}{Reg}
\DeclareMathOperator{\ord}{ord}
\DeclareMathOperator{\Gal}{Gal}
\DeclareMathOperator{\DEGRE}{deg }
\renewcommand{\deg}{\DEGRE}
\newcommand{\Q}{\ensuremath{\mathbb{Q}}}
\newcommand{\Z}{\ensuremath{\mathbb{Z}}}
\renewcommand{\P}{\ensuremath{\mathbb{P}}}
\newcommand{\F}{\ensuremath{\mathbb{F}}}
\renewcommand{\H}{\ensuremath{\mathrm{H}}}
\newcommand{\Qbar}{\ensuremath{\bar{\mathbb{Q}}}}
\newcommand{\Ecal}{\mathcal{E}}
\renewcommand{\bar}[1]{\ensuremath{\overline{#1}}}
\newcommand{\hhat}[1]{\ensuremath{\widehat{#1}}}
\newcommand{\tam}{\tau}
\renewcommand{\tt}{\mathbf{t}}
\newcommand{\ie}{\textit{i.e.}{}}
\newcommand{\cf}{\textit{cf.}{}}
\DeclareFontFamily{U}{russian}{}
\DeclareFontShape{U}{russian}{m}{n}
	{ <5><6> wncyr5
	<7><8><9> wncyr7
	<10><10.95><12><14.4><17.28><20.74><24.88> wncyr10 }{}
\DeclareSymbolFont{Russian}{U}{russian}{m}{n}
\DeclareSymbolFontAlphabet{\mathcyr}{Russian}
\let\@math@cyr\mathcyr
\renewcommand{\mathcyr}[1]{\@math@cyr{\cyracc #1}}
\newcommand{\sha}{\mathcyr{SH}}
\newcommand{\ferm}{\ensuremath{\mathcal{F}}}
\newcommand{\cond}{\mathcal{N}}
\newcommand{\type}[1]{\mathbf{#1}}
\newcommand{\partint}[1]{\left\lfloor#1\right\rfloor}
\newcommand{\tors}{_{\mathrm{tors}}}
\newcommand{\trivcar}{\mathbbm{1}}
\newcommand{\ja}{\mathbf{j}}
\newcommand{\Ja}{\mathbf{J}} 
\newcommand{\Mm}{\mathbf{m}}
\newcommand{\bA}{\mathbf{A}}
\newcommand{\ba}{\mathbf{a}}
\newcommand{\cchi}{\hm{\chi}}
\newcommand{\norm}{\mathbf{N}}
\newcommand{\gP}{\mathfrak{P}}
\newcommand{\BS}{\mathfrak{Bs}}
\newcommand{\ELL}{\mathscr{E}\!\ell\!\ell}
\title{Explicit $L$-functions  and a Brauer-Siegel theorem  \\
for  Hessian elliptic curves}
\author{ {\large Richard Griffon}%
\thanks{E-mail: \href{mailto:r.m.m.griffon@math.leidenuniv.nl}{\texttt{r.m.m.griffon@math.leidenuniv.nl}}}
\\
{\normalsize\it Mathematisch Instituut, 
Universiteit Leiden} } 
\date{}
\renewcommand{\O}{\mathcal{O}}
\begin{document}
\pagestyle{fancy}

\maketitle

\hfill\rule{5cm}{0.5pt}\hfill\phantom{.}

\paragraph{Abstract --}
For a finite field $\F_q$ of characteristic $p\geq 5$ and $K=\F_q(t)$, we consider the family of elliptic curves  $E_d$ over $K$ given by 
$y^2+xy - t^dy=x^3$  for all integers $d$ coprime to $q$. 
We provide an explicit expression for the $L$-functions of these curves.  Moreover, we deduce from this calculation that the curves $E_d$ satisfy an analogue of the Brauer-Siegel theorem. Precisely, we show that,
for $d\to\infty$ ranging over the integers coprime with $q$, one has
\[ \log\left( |\sha(E_d/K)|\cdot \Reg(E_d/K)\right) \sim \log H(E_d/K),\]
where $H(E_d/K)$ denotes the exponential differential height of $E_d$, $\sha(E_d/K)$ its Tate-Shafarevich group and $\Reg(E_d/K)$ its Néron-Tate regulator. 


\medskip

\noindent{%
{\it Keywords:} 
Elliptic curves over function fields, 
Explicit computation of $L$-functions,
Special values of $L$-functions and BSD conjecture,
Estimates of special values, 
Analogue of the Brauer-Siegel theorem.

\noindent{
{\it 2010 Math. Subj. Classification:}  
11G05, 
11G40, 
14G10, 
11F67, 
11M38. 


\medskip

\hfill\rule{5cm}{0.5pt}\hfill\phantom{.}

\section*{Introduction}
\pdfbookmark[0]{Introduction}{Introduction} 
\addcontentsline{toc}{section}{Introduction}

\setcounter{section}{0}

\paragraph{} 
Let $\F_q$ be a finite field of characteristic $p\geq 5$, and $K=\F_q(t)$.
For a non-isotrivial elliptic curve $E$ over $K$, we denote by $L(E/K, T)$ its $L$-function: it is \emph{a priori} defined as a formal power series in $T$. Deep theorems of Grothendieck and Deligne, however, show that $L(E/K, T)$ is actually a polynomial with integral coefficients, 
satisfying the expected functional equation,
whose degree is given in terms of the conductor of $E$,  
and for which the Riemann Hypothesis holds.

In general, these facts are not sufficient to study finer analytic and arithmetic questions about~$E$.
For example, a general study of the distribution of zeros of $L(E/K, T)$ on the critical line appears to be out of reach at the moment. 
In the meantime, partial evidence could be gathered 
by studying special families of elliptic curves $E/K$ for which $L(E/K, T)$ is explicitly known.

\paragraph{} 
Our first goal in this article is thus to exhibit a new infinite family for which the $L$-functions can be explicitly calculated. 
Specifically, for any integer $d\geq 2$ that is coprime to $q$, consider the \emph{Hessian elliptic curve} $E_d$ over $K$, whose affine Weierstrass model is:
\begin{equation}
E_d:\quad y^2+xy - t^d \cdot y =x^3. 
\end{equation}
To give a flavour of our result (Theorem \ref{theo.Lfunc}) without having to introduce too many notations, we restrict in this paragraph to the case where $d$ divides $|\F_q^\times| = q-1$:
by cyclicity of $\F_q^\times$, we can  choose a character $\chi:\F_q^\times\to\Qbar^\times$ of exact order $d$. 
In that case, our calculation then yields:
\begin{itheo} \label{theo.i.Lfunc}
For any integer $d$ dividing $q-1$, the $L$-function of  $E_d/K$ is given by:
 \[L(E_d/K, T) = \prod_{\substack{ k =1 \\ k \neq d/2}}^{d-1}\left(1-J_k\cdot T\right)\in\Z[T],
 \quad \text{ where }
J_k := \sum_{\substack{ x_1, x_2, x_3 \in \F_{q} \\ x_1 +x_2+x_3 = 1}} {\chi}^k\left( -x_1x_2x_3\right).\]
\end{itheo}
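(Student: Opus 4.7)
My plan is to compute $L(E_d/K,T)$ via Kummer descent along the cover $t\mapsto\tam:=t^d$, decomposing the base-changed $L$-function into character-twisted factors and evaluating each factor through a direct point count that identifies it with a Jacobi sum. The hypothesis $d\mid q-1$ ensures $\mu_d\subset\F_q^\times$, so each $\chi^k$ is a genuine character of both $\F_q^\times$ and (by restriction) $\mu_d$.

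First, I would rewrite the defining equation symmetrically. Setting $x_1=x$, $x_2=y$, $x_3=t^d-x-y$ turns $y^2+xy-t^dy=x^3$ into the pair of relations $x_1+x_2+x_3=t^d$ and $x_1^3+x_2x_3=0$. This exhibits $E_d$ as the pullback of the base curve $E_1/\F_q(\tam)$ (Weierstrass model $y^2+xy-\tam y=x^3$) along the cyclic Kummer cover $\tam=t^d$, whose Galois group is $\mu_d$. Standard Galois descent then yields
\[L(E_d/K,T)\;=\;\prod_{k=0}^{d-1}L(E_1/\F_q(\tam),\chi^k,T),\]
where the $k$-th factor is the $L$-function of $E_1$ twisted by the Kummer sheaf attached to $\chi^k$. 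For $k=0$, a Tate-algorithm computation based on $\Delta(E_1)=-\tam^3(1+27\tam)$ shows that $\cond(E_1)$ has small degree and that $L(E_1/\F_q(\tam),T)$ is trivial.

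For each nontrivial $k\in\{1,\dots,d-1\}$, I would evaluate $L(E_1,\chi^k,T)$ via a character-weighted point count. Summing $\#E_d^{\mathrm{aff}}(t_0)(\F_{q^n})$ over $t_0\in\F_{q^n}$ and using the orthogonality identity $\#\{t:t^d=\tam_0\}=\sum_{k=0}^{d-1}\chi^k(\tam_0)$ (with the convention $\chi^0(0)=1$), the total rewrites as $\#V(\F_{q^n})+\sum_{k=1}^{d-1}S_k^{(n)}$, where $V:=\{x_1^3+x_2x_3=0\}\subset\mathbb{A}^3$ has $q^{2n}$ points and
\[S_k^{(n)}\;:=\;\sum_{\substack{(x_1,x_2,x_3)\in V\\ x_1+x_2+x_3\ne 0}}\chi^k(x_1+x_2+x_3).\]
Parametrizing $V\setminus\{x_1=0\}$ by $(u,v)\mapsto(u,-u^2/v,uv)$ with $u,v\in\F_{q^n}^\times$, substituting $u=vs$, and finally setting $(a,b,c)=(s,\,v+1-s,\,-v)$ gives $a+b+c=1$ and collapses $S_k^{(n)}$, by multiplicativity of $\chi^k$, to $\chi^k(-1)\sum_{a+b+c=1}\chi^k(abc)=J_k^{(n)}$.

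Finally, to extract $L(E_d/K,T)$ from these point counts I would invoke the Grothendieck-Lefschetz trace formula on the Kodaira-N\'eron model $\overline{\Ecal_d}\to\P^1$: the quantity $\#\overline{\Ecal_d}(\F_{q^n})-1-q^{2n}$ equals the trace of Frobenius on $H^2_{\mathrm{et}}$, which by Shioda-Tate decomposes into a N\'eron-Severi contribution (eigenvalues all equal to $q$) plus a transcendental part whose characteristic polynomial is $L(E_d/K,T)$. Matching this trace against $\sum_k J_k^{(n)}$ identifies the $J_k$ ($k\ne 0$) with Frobenius eigenvalues on $H^2$, and the classical Gauss-sum identity $J_{d/2}=\chi^{d/2}(-1)\,g(\chi^{d/2})^2=q$ (valid when $d$ is even, since $\chi^{d/2}$ is then the quadratic character) shows that this particular eigenvalue belongs to the algebraic N\'eron-Severi part and is excluded from the transcendental $L$-function, yielding the product indexed by $k\in\{1,\dots,d-1\}\setminus\{d/2\}$. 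The hardest part will be the careful bookkeeping that relates the naive Weierstrass point count to the count on the smooth Kodaira-N\'eron model and the precise enumeration of algebraic Frobenius eigenvalues on $H^2$ coming from singular fibers and the Mordell-Weil lattice; in particular, the conductor exponent at $t=\infty$ depends subtly on $d\bmod 6$ and will require a separate case-by-case Tate analysis.
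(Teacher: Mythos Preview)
Your core computation—the symmetric change of variables $x_1=x,\ x_2=y,\ x_3=t^d-x-y$ and the parametrization of the cone $\{x_1^3+x_2x_3=0\}$ leading to $S_k^{(n)}=J_k^{(n)}$—is correct and elegant. It is a genuine alternative to the paper's route, which instead passes to the short Weierstrass model $y^2=x^3+x^2-8z x+16z^2$ and establishes a character-sum identity (Proposition~\ref{prop.charsum}) via the quadratic character $\lambda$. Both approaches arrive at the same key relation $\sum_{\tau\in\F_{q^n}}A_d(\tau,q^n)=-\sum_{k=1}^{d-1}J_k^{\,n}$ for the finite places.

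The genuine gap is in your extraction step. Your claim that $J_{d/2}=q$ belongs to the algebraic N\'eron--Severi part of $H^2(\overline{\Ecal_d})$ and must therefore be removed from $L$ is wrong: a conductor count gives $\deg L(E_d/K,T)=\deg\cond(E_d/K)-4$, which equals $d-1$ when $3\nmid d$ (so \emph{all} $k\in\{1,\dots,d-1\}$ appear, including $d/2$) and $d-3$ when $3\mid d$. The indices that are actually excluded are $k\in\{d/3,2d/3\}$, not $d/2$. The paper does not obtain this exclusion by Shioda--Tate bookkeeping at all; it comes from the contribution at $\tau=\infty$. When $3\mid d$, $E_d$ has good reduction at $\infty$ and Proposition~\ref{prop.charsum.infty} gives $A_d(\infty,q^n)=\sum_{\xi^3=\trivcar,\ \xi\neq\trivcar}\ja_{q^n}(\xi,\xi,\xi)=J_{d/3}^{\,n}+J_{2d/3}^{\,n}$, which exactly cancels those two terms from the finite sum; when $3\nmid d$, the reduction at $\infty$ is additive, $A_d(\infty,q^n)=0$, and nothing is removed. (In particular, the condition ``$k\neq d/2$'' in the displayed statement of Theorem~A appears to be a misprint for ``$3k\not\equiv0\bmod d$''; the paper's own general Theorem~\ref{theo.Lfunc} carries the latter, and the degree of $L$ confirms it.) If you insist on the cohomological route, you would have to account correctly for all algebraic Frobenius eigenvalues on $H^2(\overline{\Ecal_d})$—notably the $3d-1$ extra components of the $\type{I}_{3d}$ fibre at $t=0$ and the components at $\infty$—and you would find that the factor $(1-qT)$ coming from $k=d/2$ remains in $L(E_d/K,T)$, where it contributes to the analytic rank. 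The paper's direct approach via $\log L=\sum_{n\ge1}\bigl(\sum_{\tau\in\P^1(\F_{q^n})}A_d(\tau,q^n)\bigr)T^n/n$ avoids this bookkeeping entirely.
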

In Theorem \ref{theo.Lfunc}, we provide a similar formula for $L(E_d/K, T)$ under the much lighter assumption that $d$ be coprime to the characteristic of $K$. In this more general setting, one has to account for the possibly nontrivial action of $\Gal(\bar{\F_q}/\F_q)$ on the $d$-th roots of unity in $\bar{\F_q}$, which leads to mild technical complications (see \S\ref{sec.prel}, \S\ref{sec.Lfunc}). 

We hope that the explicit expression for $L(E_d/K, T)$ can be of use for several applications. 
For example, using Theorem \ref{theo.Lfunc}, one could reprove a result of Ulmer stating that  as $d\geq 2$ ranges through integers coprime to $q$, the ranks of the Mordell-Weil groups $E_d(K)$ are unbounded (see \cite[\S2-\S4]{Ulmer_largeLrank}).

\paragraph{}
In \S\ref{sec.spval.bnd} and \S\ref{sec.BS},
we then use our explicit knowledge of $L(E_d/K, T)$ to prove the following asymptotic estimate (see Theorem \ref{theo.BS}): 
\begin{itheo}\label{theo.i.BS}
 Let $\F_q$ be a finite field of characteristic $p\geq 5$, and $K=\F_q(t)$. For any integer $d$ coprime with $q$, consider the Hessian elliptic curve $E_d/K$ as above. Then the Tate-Shafarevich group $\sha(E_d/K)$ is a finite group and,
as $d\to\infty$, 
\begin{equation}\label{eq.i.BS}
\log\left( |\sha(E_d/K)|\cdot \Reg(E_d/K)\right) \sim \log H(E_d/K), 
\end{equation}
where $\Reg(E_d/K)$ denotes the Néron-Tate regulator of $E_d$, and $H(E_d/K)$ its exponential differential height.
\end{itheo}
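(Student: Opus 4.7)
The plan is to combine the explicit formula of Theorem \ref{theo.Lfunc} with the Birch--Swinnerton-Dyer formula for $E_d/K$, then carefully estimate each factor appearing on the right-hand side. The first step is to establish that BSD holds for $E_d/K$. In this function-field setting, by the theorem of Kato--Trihan, BSD reduces to the finiteness of $\sha(E_d/K)$, equivalently to Tate's conjecture for the elliptic surface $\mathcal{E}_d \to \P^1_{\F_q}$ associated to $E_d$. The Jacobi-sum shape of $L(E_d/K, T)$ strongly suggests a dominant rational map from a Fermat surface of degree $3d$ onto $\mathcal{E}_d$, in the style of Shioda and Ulmer; this yields Tate's conjecture for $\mathcal{E}_d$ and hence BSD, including the finiteness of $\sha(E_d/K)$.

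Granting BSD, one has an identity for $L^*(E_d/K, q^{-1}) := \lim_{T\to 1/q} L(E_d/K, T)/(1-qT)^{\rk E_d(K)}$ which, after taking logarithms, takes the form
\[ \log\!\bigl(|\sha(E_d/K)|\cdot \Reg(E_d/K)\bigr) = \log H(E_d/K) + \log|L^*(E_d/K, q^{-1})| + 2\log|E_d(K)_{\mathrm{tors}}| - \log\!\prod_v c_v(E_d) + O(1), \]
where the $c_v$ are the Tamagawa numbers. To deduce Theorem B it therefore suffices to prove: (a) $\log H(E_d/K) \sim \alpha\, d \log q$ for an explicit $\alpha > 0$; (b) $\log |E_d(K)_{\mathrm{tors}}| = O(1)$; (c) $\log \prod_v c_v(E_d) = o(\log H)$; and (d) $\log |L^*(E_d/K, q^{-1})| = o(\log H)$. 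Step (a) is a direct computation from a minimal Weierstrass model of $E_d$ (whose discriminant over $\F_q[t]$ is $-t^{3d}(1+27t^d)$). Step (b) follows from inspecting the reduction at an unramified place. Step (c) requires running Tate's algorithm at the $O(d)$ bad primes of $E_d$, combined with the standard bound $c_v \leq v(\Delta_{\min})+1$, which yields $\log\prod_v c_v = O(\log d) = o(\log H)$.

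The technical core of the proof is step (d). By Theorem \ref{theo.Lfunc},
\[ L^*(E_d/K, q^{-1}) = \!\!\!\prod_{k\,:\, J_k \neq q}\!\! \bigl(1 - J_k/q\bigr), \]
and the Riemann Hypothesis (Grothendieck--Deligne) gives $|J_k| = q$, so each $J_k/q$ lies on the unit circle; writing $J_k = q\, e^{i\theta_k}$ reduces step (d) to showing $\sum_k \log|2\sin(\theta_k/2)| = o(d)$. The trivial bound $|1 - J_k/q|\leq 2$ only gives $O(d)$, so sharper estimates are needed on both sides. The upper bound $\log|L^*| \leq o(d)$ requires showing that few $J_k$ cluster near $-q$, while the genuinely hard lower bound $\log|L^*| \geq -o(d)$ requires controlling how close the $J_k$ can get to $+q$. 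This last step combines an arithmetic lower bound on $|J_k - q|$ (coming from the non-vanishing of its Galois norm over $\Q$, as $J_k - q$ is an algebraic integer in a cyclotomic field) with a counting argument that limits the number of ``exceptional'' $k$'s for which $J_k$ lies very close to $q$. These analytic estimates, which form the content of \S\ref{sec.spval.bnd}, are the principal obstacle; once they are in place, Theorem B follows by assembling (a)--(d) via the BSD formula.
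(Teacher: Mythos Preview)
Your outline is correct and follows essentially the same architecture as the paper: BSD for $E_d/K$ via domination by a Fermat surface (which the paper cites as Ulmer's result, Theorem~\ref{theo.BSD}), explicit computation of $H(E_d/K)$, $|E_d(K)\tors|$, and $\tau(E_d/K)$, and reduction of \eqref{eq.i.BS} to the special-value estimate $\log|L^\ast(E_d/K,1)| = o(\log H(E_d/K))$.

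The one substantive difference is in your sketch of the lower bound for $|L^\ast|$. You propose an archimedean argument: bound each $|J_k - q|$ from below via the nonvanishing of its Galois norm in $\Q(\zeta_d)$, then count ``exceptional'' $k$. This is a natural first idea, but the norm bound it yields is typically of the shape $(2q)^{-\phi(d')}$ for some $d'\mid d$, which is too weak to combine with any reasonable counting. The paper instead works $p$-adically: since $L^\ast(E_d/K,1)\in\Z[q^{-1}]\smallsetminus\{0\}$, a lower bound on $|L^\ast|$ amounts to an upper bound on $\ord_p L^\ast$, and this is obtained (in \cite{Griffon_FermatS}, invoked as Theorem~\ref{theo.bnd.spval}) from Stickelberger's theorem for the $p$-adic valuations of Jacobi sums, together with an average equidistribution statement for subgroups of $(\Z/d\Z)^\times$. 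So the ``arithmetic input'' you anticipate is correct in spirit, but the actual mechanism is $p$-adic rather than archimedean; this is what makes the lower bound go through with the exponent $1/4-\epsilon$ appearing in Theorem~\ref{theo.spval.Bnd}.
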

From the computation of $H(E_d/K)$, one further gets that
\[\log\left( |\sha(E_d/K)|\cdot \Reg(E_d/K)\right)  \sim \tfrac{\log q}{3} \cdot d \qquad (\text{as } d\to\infty),\]
showing that the product $|\sha(E_d/K)| \cdot \Reg(E_d/K)$ 
grows exponentially fast with $d$: see 
\cite{Hindry_MW} for an interpretation of this fact in terms of the ``complexity of computing $E_d(K)$''. 

Theorem \ref{theo.i.BS} can also be restated as:
\[\forall\epsilon>0, \qquad H(E_d/K)^{1-\epsilon} \ll_{q, \epsilon} |\sha(E_d/K)|\cdot \Reg(E_d/K) \ll_{q, \epsilon} H(E_d/K)^{1+\epsilon}.\]
The upper bound here essentially proves a conjecture of Lang (the formulation for elliptic curves over $\Q$ is \cite[Conj. 1]{Lang_conjecturesEC}). Better yet, our lower bound reveals that the exponent $1$ is optimal (\ie{} no smaller exponent would work in the upper bound).

\begin{rema} The Brauer-Siegel theorem asserts that  when $k$ runs through a sequence of number fields whose degrees over $\Q$ are bounded  and such that the absolute values $\Delta_k$ of their discriminants tend to~$+\infty$, 
 one has the asymptotic estimate
  \begin{equation}\label{eq.i.classBS}
 \log\big( |\mathrm{C}l(k)|\cdot \Reg(k) \big) \sim \log \sqrt{\Delta_k}
\qquad (\text{as } \Delta_k\to\infty), 
 \end{equation}
 where $\mathrm{C}l(k)$ denotes the class-group of $k$, and $\Reg(k)$ its regulator of units (see \cite[Chap. XVI]{LANT}).
At least in their formal structure, \eqref{eq.i.BS} and \eqref{eq.i.classBS} look very similar and, following \cite{HP15}, we view Theorem \ref{theo.i.BS} as an analogue of the Brauer--Siegel theorem for the Hessian elliptic curves.  

Note that there are only six 
infinite families of elliptic curves where a complete analogue of the Brauer-Siegel 
is known to hold unconditionally: see \cite[Thm. 1.4]{HP15}, \cite[Thm. 1.1]{Griffon_Legendre}, and four more examples in \cite{Griffon_PHD}. 

\end{rema}

Let us give a rough sketch of how we prove Theorem \ref{theo.i.BS}. 
General results of Ulmer  for elliptic curves in ``Kummer towers''  imply that for all $d$ as above, $E_d/K$ satisfies the Birch and Swinnerton-Dyer conjecture\footnote{henceforth abbreviated as BSD} (see \cite[\S6]{Ulmer_largeLrank}). In particular, the Tate-Shafarevich group $\sha(E_d/K)$ is finite and, by  bounding some of the terms appearing in the ``BSD formula'' (see Corollary \ref{coro.tamtors.Bnd}), we get
\[ \frac{\log\big(|\sha(E_d/K)|\cdot\Reg(E_d/K)\big)}{\log H(E_d/K)} 
= 1 + \frac{\log L^\ast(E_d/K, 1)}{\log H(E_d/K)} + o(1)
\qquad (\text{as } d\to\infty),\]
where $L^\ast(E_d/K, 1)$ denotes the special value of $L(E_d/K, T)$ at $T=q^{-1}$ (see Corollary \ref{coro.link.spval.BS}).
Given this link with $L^\ast(E_d/K, 1)$, proving the estimate \eqref{eq.i.BS} is equivalent to the more analytic problem of showing that 
\begin{equation}\label{eq.i.spval.bnd}
 \left| \frac{\log L^\ast(E_d/K, 1)}{\log H(E_d/K)}\right| = o(1) 
\qquad (\text{as } d\to\infty).\end{equation}
In a previous article \cite{Griffon_FermatS}, we have proved bounds on special values of   $L$-functions  of a certain type.
Since $L(E_d/K, T)$ is explicitly known, we can check that it has the correct shape to apply these results. 
The resulting upper and lower bounds are enough to ensure that \eqref{eq.i.spval.bnd} holds (see \S\ref{sec.spval.bnd}).

\paragraph{}
The paper is organized as follows.
We begin by giving, in \S\ref{sec.hessian}, a detailed presentation of the curves $E_d$ and we compute the relevant invariants: 
height, conductor, torsion subgroup and Tamagawa number.
The next two sections are devoted to the calculation of the  $L$-functions of $E_d$: \S\ref{sec.prel} introduces the necessary notations and tools  while \S\ref{sec.Lfunc} contains the result and its proof.
Finally, we show the analogue of the Brauer-Siegel theorem for $E_d$: we prove the necessary bounds on the special value  in \S\ref{sec.spval.bnd}, before recalling the BSD conjecture and concluding the proof of Theorem \ref{theo.i.BS} in \S\ref{sec.BS}.

\paragraph{Notations} 
For two functions $f(x), g(x)$ defined on $[0,\infty)$, we use both Landau's ``$f(x)= O_a(g(x))$'' and Vinogradov's ``$f(x)\ll_{a} g(x)$'' notations to mean that there exists a constant $C>0$  depending at most on the mentioned parameters $a$  such that $|f(x)|\leq C g(x)$ for $x\to\infty$.
Unless otherwise stated, all constants are effective and could be made explicit.

\numberwithin{equation}{section}

\section{Hessian elliptic curves}
\label{sec.hessian}

\begin{center}
\emph{Throughout this article, we fix a finite field $\F_q$ of characteristic $p\geq 5$,
and we denote by $K=\F_q(t)$.}
\end{center}

 Let $E/K$ be a nonconstant elliptic curve with a $K$-rational (nontrivial) $3$-torsion point $P_0$. 
 Translating $P_0$ to the origin $(0,0)$, we can assume that $E$ has an affine Weierstrass model of the form
 \[ y^2 +xy - A(t)\cdot y=x^3,\]
for some $A(t)\in\F_q[t]$ with $A(t)^3\neq 1$ (see  \cite[\S7.10]{SchShio}}). 
This model is often called the \emph{Hessian normal form} of $E$. 
In this article, we exclusively concentrate on the case when $A(t)$ is a monomial $A(t)=t^d$, for some integer $d\geq 1$ which we always assume to be coprime with $q$. 
For all such integers $d$, we thus denote by $E_d$ the elliptic curve over $K$ given by the affine Weierstrass model:
\begin{equation}\label{eq.Wmodel}
E_d:\quad y^2+xy - t^d \cdot y =x^3, 
\end{equation}
which we call the \emph{$d$-th Hessian elliptic curve} over $K$. 
It can readily be seen that the model \eqref{eq.Wmodel} has discriminant $\Delta = -t^{3d}(27t^d+1)$, and that the $j$-invariant of $E_d$ is:
\[j(E_d/K) = - \frac{(24t^d+1)^3}{t^{3d}(27t^d+1)} \in K.\]
Hence, as a map $\P^1\to\P^1$, the $j$-invariant $j(E_d/K)$ is not constant so that $E_d$ is not isotrivial. 
Note also that the $j$-invariant is separable (\ie{} $j(E_d/K)\notin K^p$) since $p\geq 5$.
  
 The reader is referred to \cite[Lecture 1]{UlmerParkCity} and \cite{SchShio} for nice expositions of basic results about elliptic curves over function fields in positive characteristic.
 
 \begin{rema} 
These elliptic curves $E_d$ have previously been studied  by Davis and Occhipinti (see  \cite{OcchipintiDavis}) from a different perspective: for many values of $d$, they have produced explicit $\F_{q^2}(t)$-rational points on $E_d$, which generate a full-rank subgroup of $E_d(\F_{q^2}(t))$. They use these explicit points to study the size of certain character sums over $\F_{q^2}$.
 \end{rema}

\subsection{Bad reduction and invariants}

Let us start by describing the bad reduction of $E_d$ and by determining the relevant invariants thereof. 

By inspection of the places of $K$ dividing the discriminant $\Delta$ of \eqref{eq.Wmodel}, 
one can see that $E_d$ has good reduction outside $\{0\}\cup B_d\cup\{\infty\}$, where $B_d$ is the set of places of $K$ that divide $27t^d+1$ 
(\ie{} $B_d$ is the set of closed points of $\P^1$ corresponding to $d$-th roots of $-1/27$). 
More precisely, we have:
\begin{prop}\label{prop.badred}
The elliptic curve $E_d$ has good reduction outside $S=\{0\}\cup B_d\cup\{\infty\}$.
The reduction of $E_d$ at places $v\in S$ is as follows:
\begin{center}\renewcommand{\arraystretch}{1.5}
\begin{tabular}{| c | c   l | c | c | c |} 
\hline
Place $v$ of $K$&  \multicolumn{2}{c|}{Type of $E_d$ at $v$ }  & $\ord_{v}\Delta_{\min}(E_d/K)$ & $\ord_{v}\cond(E_d/K)$ & $ c_v(E_d/K)$ \\
\hline \hline
$0$  & $\type{I}_{3d}$   & 
& $3d$ & $1$ & $3d$ \\ \hline
$v\in B_d$
 &  $\type{I}_1$    & & $1$ &$1$& $1$ \\  \hline
\multirow{ 3}{*}{$\infty$}
    & $\type{I}_{0}$   & if $d\equiv0\bmod{3}$ & $0$ & $0$ &  $1$ \\ \cline{2-6}
    & $\type{IV}$   & if $d\equiv-1\bmod{3}$ & $4$ & $2$ &  $3$ \\ \cline{2-6}    
    & $\type{IV}^\ast$ &  if $d\equiv-2\bmod{3}$ & $8$ & $2$ & $3$ \\ \hline
\end{tabular}
\end{center}

\noindent
In this table, for all places  $v$ of $K$ where $E_d$ has bad reduction, we have denoted by $\ord_{v}(\Delta_{\min})$ 
(resp. $\ord_v(\cond)$) the valuation at $v$ of the minimal discriminant of $E_d$ 
(resp. of the conductor of $E_d$), and  by $ c_v(E_d/K)$ the 
local Tamagawa number (see \cite[Chap. IV, \S9]{ATAEC}  for definitions). 
\end{prop}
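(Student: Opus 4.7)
The plan is to pinpoint the places of bad reduction from the discriminant of the model \eqref{eq.Wmodel}, and then apply Tate's algorithm at each such place to compute the Kodaira type, conductor exponent, and Tamagawa number. Since $\gcd(d, p) = 1$ and $p \geq 5$, the polynomial $27 t^d + 1$ has $d$ simple roots in $\bar{\F_q}$, and inspection of $\Delta = -t^{3d}(27 t^d + 1)$ shows that $E_d$ has good reduction away from $S = \{0\} \cup B_d \cup \{\infty\}$. I treat the three families of bad places in turn.

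At $v = 0$ one has $\ord_0(j) = -3d < 0$, so the reduction is multiplicative. Reducing \eqref{eq.Wmodel} modulo $t$ yields $y^2 + xy = x^3$, which is nodal at the origin with tangent lines $y = 0$ and $y = -x$, both $\F_q$-rational; the reduction is therefore split multiplicative of type $\type{I}_{3d}$, giving $c_0 = 3d$ and conductor exponent $1$. For $v \in B_d$ lying over a simple root of $27 t^d + 1$, the same reasoning (now with $\ord_v(\Delta) = 1$ and $\ord_v(j) = -1$) yields the multiplicative type $\type{I}_1$, hence $c_v = 1$ and conductor exponent $1$.

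The case $v = \infty$ requires passing first to the local uniformizer $s = 1/t$ and rescaling the coordinates. Setting $e = \lceil d/3 \rceil$ and substituting $(x, y) = (s^{-2 e} x_1, s^{-3 e} y_1)$ in \eqref{eq.Wmodel} produces the integral Weierstrass model
\[y_1^2 + s^{e} x_1 y_1 - s^{3 e - d} y_1 = x_1^3,\]
whose discriminant equals $-s^{12 e - 4 d}(27 + s^d)$ and thus has valuation $12 e - 4 d$ at $s = 0$. The analysis then splits by $d \bmod 3$: when $d \equiv 0$, one has $3 e - d = 0$ and the reduction $y_1^2 - y_1 = x_1^3$ is smooth (its only potential singular point $(0, 1/2)$ does not satisfy the equation in characteristic $\geq 5$), so $E_d$ has good reduction at $\infty$. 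When $d \equiv 2 \pmod 3$ (resp. $d \equiv 1$), the exponents are $3 e - d = 1$ and $12 e - 4 d = 4$ (resp. $2$ and $8$), and running Tate's algorithm on the model above identifies the Kodaira type as $\type{IV}$ (resp. $\type{IV}^\ast$); Ogg's formula then returns the conductor exponent $2$ in both additive cases.

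It remains to verify that $c_\infty = 3$ in the two additive cases, which I expect to be the main mechanical obstacle. For type $\type{IV}$, the split/nonsplit dichotomy hinges on whether a certain quadratic polynomial over the residue field has its roots in $\F_q$; with $a_3 = -s$ and $a_6 = 0$ in our setup the polynomial factors as $Y(Y - 1)$, so it splits over any field and $c_\infty = 3$ follows. An analogous split check carried out at the corresponding step of Tate's algorithm for type $\type{IV}^\ast$ yields $c_\infty = 3$ in that case as well.
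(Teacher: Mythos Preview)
Your proposal is correct and follows the same approach as the paper, which simply states that the result follows from ``a routine application of Tate's algorithm'' without further details. You have carried out that routine application explicitly: your change of variables at $\infty$ matches the minimal model given later in Remark~\ref{rem.minimod.loc}, and your verification of the split condition for types $\type{IV}$ and $\type{IV}^\ast$ via the factorization $Y(Y-1)$ is exactly what the algorithm prescribes.
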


\begin{proof}
This follows from a  routine application of  Tate's algorithm to $E_d$ 
(see \cite[Chap. IV, \S9]{ATAEC}). 
\ProofEnd\end{proof}

With this Proposition, we can compute the \emph{minimal discriminant divisor} $\Delta_{\min}(E_d/K)$ and the \emph{conductor} $\cond(E_d/K)$ of $E_d$. In particular, they have degree
\begin{equation}\label{eq.inv1}
\deg \Delta_{\min}(E_d/K) = \begin{cases}
4d &\text{ if $d\equiv0\bmod{3}$}, \\
4(d+1) &\text{ if $d\equiv-1\bmod{3}$}, \\
4(d+2) &\text{ if $d\equiv-2\bmod{3}$}, \end{cases}\quad \text{ and } \quad 
\deg \cond(E_d/K)= \begin{cases}
d+1 &\text{ if $d\equiv0\bmod{3}$}, \\
d+3 &\text{ otherwise}. \end{cases} 
\end{equation}
Indeed, note that $\sum_{v\in B_d}\deg v = d$. 
By definition, the \emph{exponential differential height of $E_d/K$} is then
\begin{equation}\label{eq.inv2}
H(E_d/K) = q^{\frac{1}{12}\deg\Delta_{\min}(E_d/K)}=q^{\partint{\frac{d+2}{3}}},
\end{equation}
where $\partint{.}$ denotes the floor function.

\begin{rema}
The following alternative definition of $H(E_d/K)$ justifies its name  (see \S2 in \cite[Lect. 3]{UlmerParkCity}). 
Let $\pi:\Ecal_d\to\P^1$ be the Néron model of $E_d/K$ and $s_0 : \P^1 \to\Ecal_d$ be the unit section, if  $\Omega^1_{\Ecal_d/\P^1}$ denotes  the sheaf of relative diffentials we let $\omega= \omega_{E_d/K}$ be the line bundle $s_0^\ast(\Omega^1_{\Ecal_d/\P^1})$ on $\P^1$. 
Then the minimal discriminant divisor $\Delta_{\min}(E_d/K)$ corresponds to a section of $\omega^{\otimes12}$. 
In particular, one has $12 \deg \omega = \deg \Delta_{\min}(E_d/K)$, and $H(E_d/K)= q^{\deg \omega }$.
\end{rema}

\begin{rema} \label{rem.minimod.loc} 
It will be convenient to have a (locally) minimal short Weierstrass model of $E_d$ at our disposal (see \S\ref{sec.Lfunc.calc}). 
By a straightforward change of variables in \eqref{eq.Wmodel}, one shows that $E_d$ can be given by:
\[ E_d:\qquad  y^2 = x^3 + x^2 -8t^d \cdot x +16 t^{2d}.\]
The discriminant of this integral Weierstrass model is $\Delta' = -2^{12}t^{3d}(27t^d+1)$. 
For all places $v\neq\infty$ of $K$,  $\ord_v \Delta' = \ord_v\Delta_{\min}(E_d/K)$ so that this new model is minimal at all the finite places $v$ of $K$. 
At $v=\infty$, the application of Tate's algorithm when $3\mid d$ (\ie{} in the case of good reduction) proves that a minimal integral model of $E$ at $\infty$ is 
$y^2+u^{d/3}xy-y=x^3$, where $u=1/t$ is the uniformizer at $\infty$. 
This model is readily brought into short Weierstrass form:
\[E_d: \quad y^2= x^3 + \frac{u^{2d/3}}{4}x^2 - \frac{u^{d/3}}{2}x + \frac{1}{4}.\]
\end{rema}

\subsection{Torsion and Tamagawa number}

In this section, we compute the torsion subgroup $E_d(K)\tors$, as well as the Tamagawa number $\tau(E_d/K)$. 

\begin{prop}\label{prop.tors} 
For any integer $d\geq 1$ coprime with $q$, one has 
\[E_d(K)\tors \simeq \Z/3\Z.\]
More precisely, $E_d(K)\tors $ is generated by $P_0=(0,0)$. 
\end{prop}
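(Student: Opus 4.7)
First I would verify that $P_0=(0,0)$ has order exactly $3$ on $E_d$. Computing the partial derivatives of $F(x,y):=y^2+xy-t^d y-x^3$ at $P_0$ gives $\partial_xF(P_0)=0$ and $\partial_yF(P_0)=-t^d\neq 0$, so the tangent line to $E_d$ at $P_0$ is the horizontal line $\{y=0\}$. Substituting $y=0$ into \eqref{eq.Wmodel} reduces to $x^3=0$, so this tangent meets $E_d$ only at $P_0$, with multiplicity $3$. By the geometric description of the group law, $P_0$ is then an inflection point and $3P_0=O$; since $P_0\neq O$, its order is exactly $3$, giving the inclusion $\langle P_0\rangle\subseteq E_d(K)_{\mathrm{tors}}$.

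For the reverse inclusion I would first pin down the $2$- and $3$-torsion of $E_d$ rational over $K$. With the standard $b_i$-invariants of \eqref{eq.Wmodel} (namely $b_2=1$, $b_4=-t^d$, $b_6=t^{2d}$, $b_8=0$), one computes that the $3$-division polynomial factors as
\[\psi_3(x)\;=\;x\,\bigl(3x^3+x^2-3t^d x+3t^{2d}\bigr),\]
while the $x$-coordinates of the non-identity $2$-torsion points are the roots of $4x^3+x^2-2t^d x+t^{2d}$. For each of these two cubic equations in $x$, any $K$-rational root must, by Gauss's lemma in the UFD $\F_q[t]$, be a monomial $c\,t^k$ with $c\in\F_q^\times$ and $0\leq k\leq 2d$ (recall that $p\geq 5$ makes $3$ and $4$ invertible). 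Substituting such a monomial and inspecting case-by-case which exponents among $3k$, $2k$, $d+k$, $2d$ may coincide produces a contradiction in every subcase. This proves that $E_d[2](K)=\{O\}$ and $E_d[3](K)=\langle P_0\rangle$.

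To rule out the remaining torsion -- namely $\ell$-torsion for primes $\ell>3$ and higher $3$-power torsion not precluded by the previous step -- I would use reduction at a convenient place $v\notin S$ of good reduction: for any prime $\ell\neq p$, the reduction map induces an injection $E_d(K)[\ell^\infty]\hookrightarrow\tilde E_d(\kappa(v))[\ell^\infty]$. The $p$-primary part is handled separately using the split multiplicative reduction of type $\type{I}_{3d}$ at $v=0$: Tate's uniformization gives $E_d(K_0)\cong K_0^\times/q_{E_d}^{\Z}$ whose formal part $\hat{\mathbb{G}}_m$ is torsion-free, forcing the $p$-primary torsion of $E_d(K)$ to embed into a group of multiplicative type over $\F_q$ which one checks has no $p$-part. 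Combining these embeddings with the division-polynomial analysis above forces $E_d(K)_{\mathrm{tors}}=\langle P_0\rangle$.

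I expect the main technical obstacle to lie in this final step: obtaining a bound uniform in $d$, since $|\tilde E_d(\kappa(v))|$ depends on $d$ in a subtle way. A robust workaround would be to use two specialisations $t=a_1$ and $t=a_2$ with $a_1,a_2\in\F_q\setminus(\{0\}\cup B_d)$ simultaneously, and to observe that for a generic such pair the gcd of $|\tilde E_{d,a_1}(\F_q)|$ and $|\tilde E_{d,a_2}(\F_q)|$ is small enough that, combined with $E_d[2](K)=\{O\}$ and $E_d[3](K)=\langle P_0\rangle$, no extra $\ell$-torsion for $\ell>3$ can survive.
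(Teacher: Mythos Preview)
Your first two steps are sound: the inflection-point argument correctly shows that $P_0$ has order~$3$, and the rational-root analysis of the $2$- and $3$-division polynomials over $\F_q[t]$ does give $E_d[2](K)=\{O\}$ and $E_d[3](K)=\langle P_0\rangle$. The Tate-uniformization sketch for the $p$-primary part can also be made to work, though the paper simply invokes that $j(E_d)\notin K^p$.

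The gap is exactly where you flag it, and your proposed workaround does not close it. Knowing $E_d[3](K)=\langle P_0\rangle$ does \emph{not} preclude a rational point of order~$9$, and your specialization scheme cannot rule one out: every good fibre $y^2+xy-by=x^3$ already carries the $3$-torsion point $(0,0)$, so $3$ divides every $|\tilde E_d(\kappa(v))|$, and nothing you have written prevents $9$ from dividing them all. The phrase ``for a generic such pair the gcd \ldots\ is small enough'' is not an argument --- you would need to exhibit, for each $d$, specific places whose fibres have no $9$-torsion and no $\ell$-torsion for each relevant $\ell>3$, and your outline supplies no mechanism for doing so uniformly in $d$.

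The paper bypasses the difficulty by using the N\'eron model rather than good fibres. When $3\nmid d$, $E_d$ has additive reduction of type $\type{IV}$ or $\type{IV}^\ast$ at~$\infty$, and the prime-to-$p$ torsion injects into the component group there, which is $\Z/3\Z$ --- done in one line. When $3\mid d$ there is no additive place, so the paper injects $T$ into the product of all component groups ($\simeq\Z/3d\Z$, hence $T$ is cyclic of some order $M$ with $3\mid M\mid 3d$), then applies Riemann--Hurwitz to the induced map $\P^1\to X_1(M)$ to force $g(X_1(M))=0$, leaving $M\in\{3,6,9,12\}$; your $2$-torsion computation disposes of $6$ and $12$, and an explicit triplication-formula calculation disposes of $9$. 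The component-group injection is the structural ingredient your approach is missing.
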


\begin{proof} 
Let $T:=E_d(K)\tors$ and $P_0=(0,0)\in E_d(K)$: it is easy to check that  $2P_0=(0,t^d) = -P_0$. 
 In particular, the point $P_0$ is $3$-torsion, and $T$ already contains a subgroup isomorphic to $\Z/3\Z$.

We observe that the $p$-part $T[p^\infty]$ of  $T$ must be trivial for $j(E_d/K)\in K$ is not a $p$-th power in $K$ (see \cite[Lect. 1, Prop. 7.3]{UlmerParkCity}). 
For any place $v$ of $K$, let $G_v$ be the component group of the fiber at $v$ of the Néron model of $E_d$ (see \cite[§7]{SchShio}, \cite[Chapter IV, §9]{ATAEC}). The table in 
\cite[p.365]{ATAEC} gives that
\begin{equation}\label{eq.comp.gps}
G_v \simeq \begin{cases}
 \Z/n\Z &\text{ if the fiber at $v$ has type $\type{I}_n$ }(n\geq 1), \\
 \Z/3\Z &\text{ if the fiber at $v$ has type $\type{IV}$ or $\type{IV}^\ast$.}
 \end{cases}
\end{equation}
We now distinguish two cases. 
First assume that $3\nmid d$: by Proposition \ref{prop.badred}, $E_d$ has additive reduction at the place $\infty$ (with type $\type{IV}$ or $\type{IV}^\ast$). 
Lemma 7.8 in \cite{SchShio} asserts that the non-$p$-part of $T$ injects into the component group $G_v$ at an additive place $v$. 
Here, this yields that  the whole of $T$ injects into $G_\infty\simeq \Z/3\Z$, and we conclude that $T\simeq\Z/3\Z$ in this case. 

We now turn to the case when $3\mid d$.
By Corollary 7.5 in \cite{SchShio}, the torsion group $T$ injects into the product $\prod_{v\mid \Delta} G_v$ of the component groups. 
Therefore, $T$ is a subgroup of $\prod_{v\mid \Delta} G_v \simeq \Z/3d\Z$ (see \eqref{eq.comp.gps}). 
From which we deduce that $T$ is cyclic of some order $M\in\Z_{\geq1}$, with  $3\mid M\mid 3d$. 

We denote by $X_1(M)$ the compactification of the modular curve classifying pairs $(E,P)$ where $E$ is an elliptic curve and $P$ is a rational point of order $M$. 
Choosing a generator  $Q\in T$, we form a pair $(E_d,Q)$ which, by construction, corresponds to a $K$-rational (non-cuspidal) point on $X_1(M)$. 
Hence, there exists a morphism $j' : \P^1\to X_1(M)$. 
As we have seen, the $j$-invariant $j(E_d/K):\P^1\to\P^1$ 
is non constant and separable, and so is the induced morphism $j'$.
Applying the Riemann-Hurwitz formula to $j'$ yields that  the genus of $X_1(M)$ has to be $0$. 
This  can only happen for  $M\in\{1,2,3,4,5,6,7,8,9,10,12\}$ (see \cite[Lect. 1, \S7]{UlmerParkCity}). 
Given that $M$ must be divisible by $3$, there remains only four possible values: $M\in\{3,6,9,12\}$. 
To conclude the proof in this case, it suffices to check  that $M$ must be odd, and that $M$ cannot be $9$. 
 
If there were a  point $P=(x,y)\in E_d(K)$ of exact order $2$, then the $x$-coordinate of $P$ would satisfy $ 4x^3-2x^2-2t^d\cdot x+t^{2d}=0$.
Letting $u=1/t$ and $x_1=u^{2d/3}\cdot x$ (recall that $3\mid d$),  we would obtain that
 $ 4x_1^3-2u^{2d/3}\cdot x_1^2-2u^{d/3}\cdot x_1+1=0$. 
But the latter equation has no solution  $x_1\in\F_q(u)$ since it factors as $(4x_1^2-2u^{2d/3}\cdot x_1-2u^{d/3})\cdot x_1=-1$.
This contradiction shows that $E_d(K)$ has no nontrivial $2$-torsion, so that $M=|T|$ is odd. 

Next suppose that there exists a $K$-rational point $Q=(x,y)$ of order exactly $9$ on $E_d$. 
Up to replacing $Q$ by one of its multiples, we can assume that 
$3\cdot Q=P_0$.
Using the triplication formula
(see \cite[Chap. III, Ex. 3.7 (d)]{AEC}), it is possible to express the $x$-coordinate of $3Q=P_0$ in terms of $x$. 
By a rather tedious computation, one can show  that $x$ must then satisfy:
\begin{equation}\notag
x^9
+6t^d\cdot x^7
+t^d(1-24t^{d})\cdot x^6
-6t^{2d}\cdot x^5
+3t^{3d}\cdot x^4
+t^{3d}(3t^{d}-1)\cdot x^3
+3t^{4d}\cdot x^2
-3t^{5d}\cdot x
+t^{6d}=0.
\end{equation}
Letting $u=1/t$, $v=u^{d/3}$ and $x_2= u^{2d/3}\cdot x = v^2\cdot x$, the above relation yields that $x_2\in\F_q(u)$ is a solution of either 
$0 =x_2^3+v\cdot x_2^2-3x_2^2-v\cdot x_2+1$ or  
\begin{align*}
0&=
x_2^6
+(3-v)\cdot x_2^5 
+(v^2+v+9)\cdot  x_2 ^4
+(v^2-3v+2)\cdot x_2 ^3
+(v^2-v+3)\cdot x_2 ^2-2v^2\cdot x_2 +1.
\end{align*}
Since none of these equations has any solution $x_2\in\F_q(u)$,  the $9$-torsion in $E_d(K)$ has to be trivial. 
 Therefore, $M = 3$ and $T\simeq \Z/3\Z$ as claimed.  
 \ProofEnd\end{proof}

The (global) \emph{Tamagawa number} $\tam(E_d/K) := \prod_{v\in S} c_v(E_d/K)$ 
can be computed from the last column of the table in Proposition  \ref{prop.badred}: we immediately get
\begin{equation}\label{eq.tam.est}
\tam(E_d/K)=\begin{cases}
3d &\text{ if $d\equiv0\bmod{3}$}, \\
9d &\text{ otherwise}. \end{cases} 
\end{equation}
In \S\ref{sec.BSD}, we will need the results of Proposition \ref{prop.tors} and \eqref{eq.tam.est} in the form of the following bound:

\begin{coro}\label{coro.tamtors.Bnd}
 For all integers $d\geq 2$, coprime with $q$, the following bound holds:
\[ \frac{\log d}{d}  \ll_q \frac{\log \left( \tam(E_d/K)\cdot q \cdot |E_d(K)\tors|^{-2}\right)}{\log H(E_d/K)} \ll_q \frac{\log d}{d} ,\qquad (\text{as }d\to\infty),
\]
for some effective constants depending at most on $q$.
\end{coro}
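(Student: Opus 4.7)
The plan is to combine the three pieces of information already established: the value of $|E_d(K)_{\mathrm{tors}}|$ from Proposition~\ref{prop.tors}, the formula for $\tam(E_d/K)$ in \eqref{eq.tam.est}, and the formula for $H(E_d/K)$ in \eqref{eq.inv2}. Each of these pins down the relevant quantity up to constants depending only on $q$, so the corollary will reduce to a direct size comparison.

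First, by Proposition~\ref{prop.tors} we have $|E_d(K)_{\mathrm{tors}}| = 3$ for every admissible $d$, so $|E_d(K)_{\mathrm{tors}}|^{-2} = 1/9$ is an absolute constant. Next, the formula \eqref{eq.tam.est} for $\tam(E_d/K)$ shows that $\tam(E_d/K) \in \{3d, 9d\}$, and in particular $\tam(E_d/K) = d \cdot c(d)$ where $c(d) \in \{3,9\}$. Hence the numerator satisfies
\[
\log\!\bigl(\tam(E_d/K) \cdot q \cdot |E_d(K)_{\mathrm{tors}}|^{-2}\bigr) = \log d + \log q + O(1),
\]
where the $O(1)$ term is absolute (bounded by $\log(9/9)$ vs $\log(9/9) + \log 3$). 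So the numerator behaves like $\log d$ up to a bounded additive error depending on $q$.

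For the denominator, by \eqref{eq.inv2} we have $\log H(E_d/K) = \lfloor (d+2)/3 \rfloor \cdot \log q$, which equals $(d \log q)/3 + O(\log q)$. Consequently
\[
\frac{\log\!\bigl(\tam(E_d/K)\cdot q \cdot |E_d(K)_{\mathrm{tors}}|^{-2}\bigr)}{\log H(E_d/K)} = \frac{\log d + \log q + O(1)}{(d/3)\log q + O(\log q)} = \frac{3 \log d}{d \log q}\bigl(1 + o(1)\bigr)
\]
as $d \to \infty$. This asymptotic immediately yields effective two-sided bounds of the form $c_1(q) \cdot (\log d)/d$ and $c_2(q) \cdot (\log d)/d$ for the ratio, as claimed.

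There is no real obstacle here: the only mildly delicate point is tracking how the constants depend on $q$ (through $\log q$ in the denominator), but since $q$ is fixed throughout and all $O(\cdot)$ constants above are absolute, both the upper and the lower bound come out with implied constants depending solely on $q$, as required.
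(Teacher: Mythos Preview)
Your argument is correct and is exactly the straightforward computation the paper has in mind: it simply says the corollary follows directly from the explicit values of $H(E_d/K)$, $|E_d(K)\tors|$ and $\tam(E_d/K)$, which is precisely what you have carried out. The only cosmetic issue is the parenthetical ``bounded by $\log(9/9)$ vs $\log(9/9)+\log 3$'', which is garbled; the intended bound on the $O(1)$ term is $|\log(c(d)/9)|\leq \log 3$, but this does not affect the argument.
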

This is a straightforward consequence of our computations of $H(E_d/K)$, $|E_d(K)\tors|$ and $\tam(E_d/K)$.

\begin{rema}
The above Corollary could also have been obtained as a special case of deep results in \cite{HP15}. 
In that paper, 
the authors prove upper bounds on the order of the torsion subgroup (\emph{loc. cit.}, Thm. 3.8)   
and on the Tamagawa number (\emph{loc. cit.}, Thm. 6.5), 
which are valid  for abelian varieties over~$K$, under mild semistability assumptions. 
Note that their proof  is much more involved and less explicit, which is why we chose to include a self-contained treatment here. 
\end{rema}

\section[Preliminaries for the computation of the L-function]{Preliminaries for the computation of the $L$-function} 
\label{sec.prel}

The goal of the next section is to calculate the $L$-function of $E_d$ in terms of Jacobi sums. 
In this section, we introduce the necessary notations and review the required facts about characters and Jacobi sums. The notations introduced in this section will be in force for the rest of the paper.

\subsection[Action of q on Z/dZ]{Action of $q$ on $\Z/d\Z$}
\label{sec.q.act}

For any integer $d\geq 2$ coprime to $q$, $q$ acts naturally on $\Z/d\Z$ by $n\mapsto q\cdot n$. 
For any subset $Z\subset \Z/d\Z$ which is stable under this action, we denote by $\O_q(Z)$ the set of orbits of $Z$. 
In what follows, we will be particularly interested in the set
\[Z_d := \begin{cases}\Z/d\Z \smallsetminus\{0, d/3, 2d/3\} &\text{ if $d\equiv 0\bmod{3}$,} \\
 \Z/d\Z \smallsetminus\{0\} &\text{ otherwise},
 \end{cases}\]
(which is stable under multiplication by $q$ because $\gcd(d,q)= 1$) and in the corresponding set of orbits $\O_q(Z_d)$. 
Given an orbit $\Mm\in\O_q(Z_d)$, we will often need to make a choice of representative $m\in Z_d$ of this orbit: we make the convention that orbits in $\O_q(Z_d)$ are always denoted by a bold letter ($\Mm$, $\mathbf{n}$,~...) and that the corresponding normal letter ($m$, $n$,~...) designates any choice of representative of this orbit in $Z_d$.
We also identify without comment $\Z/d\Z$ with its lift $\{0, 1, 2, \dots, d-1\}$ in $\Z$.

For any orbit $\Mm\in\O_q(Z_d)$, its length $|\Mm| = \left| \left\{ m, qm, q^2m, \dots\right\}\right|$ is equal to
\[|\Mm| = \min\left\{ n\in\Z_{\geq 1} : \ q^n m \equiv m \bmod{d}\right\},\]
which can equivalently be described as the multiplicative order of $q$ modulo $d/\gcd(d,m)$, for any $m\in\Mm$.
By construction of the multiplicative order, we note that, for a power $q^n$ of $q$, one has $q^n m\equiv m \bmod{d}$ if and only if $|\Mm|$ divides $n$, \ie{} if and only if $\F_{q^n}$ is a finite extension of $\F_{q^{|\Mm|}}$.

\begin{rema}\label{rem.easy.action}
 In the special case when $d$ divides $q-1$ (\ie{} when $q\equiv 1\bmod{d}$), the action of $q$ on $Z_d$ is trivial and there is a bijection between $\O_q(Z_d)$ and $Z_d$.
\end{rema}

\subsection[Characters of order dividing d]{Characters of order dividing $d$}
\label{sec.char}

Fix an algebraic closure $\bar{\Q}$ of $\Q$ and a prime ideal $\gP$ above $p$ in the ring of integers $\bar{\Z}$ of $\bar{\Q}$: 
the residue field $\bar{\Z}/\gP$ is an algebraic closure $\bar{\F_p}$ of $\F_p$. The given finite field $\F_q$ and, more generally, any finite extension thereof 
will be seen as subfields of $\bar{\F_p}$.
The reduction map $\bar{\Z}\to\bar{\Z}/\gP$ induces an isomorphism between the group $\mu_{\infty, p'}\subset\bar{\Z}^\times$ of roots of unity of order prime to $p$ and the multiplicative group $\bar{\F_p}^\times$. 
We let $\tt:\bar{\F_p}^\times\to\mu_{\infty, p'}$ be the inverse of this isomorphism, and we denote by the same letter the restriction of $\tt$ to any finite extension of $\F_q$. 

Any nontrivial multiplicative character on a finite extension of $\F_q$ is then a power of (a  restriction of)~$\tt$, \cf{} \cite[\S3.6.2]{Cohen}. The trivial multiplicative character will be denoted by $\trivcar$.

\begin{defi}
For any $m \in \Z/d\Z\smallsetminus\{0\}$ and any integer $s\geq 1$, define a character $\tt_m^{(s)} : \F_{q^{s\cdot|\Mm|}}^\times\to \Qbar^\times$~by
\[\forall x\in\F_{q^{s\cdot|\Mm|}}^\times, \quad  \tt_m^{(s)}(x) = \left(\tt\circ\norm_{q^{s\cdot|\Mm|}/q^{|\Mm|}}(x)\right)^{(q^{|\Mm|}-1)m/d} \quad \text{ and we let } \tt_m^{(s)}(0):=0.
\]
Here, $\norm_{q^{s\cdot |\Mm|} / q^{|\Mm|}} : \F_{q^{s\cdot |\Mm|}}\to \F_{q^{|\Mm|}}$ denotes the norm of the extension $\F_{q^{s\cdot |\Mm|}}/ \F_ {q^{|\Mm|}}$. 

\noindent When $s=1$, we denote $\tt_m^{(1)}$ by $\tt_m$ for short. 
Note that $\tt_m^{(s)}$ is indeed a character because $\tt$ and the norm $\norm_{q^{s\cdot |\Mm|} / q^{|\Mm|}}$ are  both multiplicative.
\end{defi}

By construction,  $\tt_m$ is a character on $\F_{q^{|\Mm|}}^\times$ and its order divides $d$: more precisely, by noting that the restriction of $\tt$ to $\F_{q^{|\Mm|}}^\times$ has exact order $q^{|\Mm|}-1$, it can be shown that the order of $\tt_m$ is exactly $d/\gcd(d, m)$.
The ``lifted character'' $\tt_m^{(s)}$ is defined on $\F_{q^{s\cdot|\Mm|}}^\times$ and has the same order as $\tt_m$  because  the norm $\norm_{q^{s\cdot |\Mm|} / q^{|\Mm|}}$ is surjective.
Moreover, the following result shows that we can thus enumerate all characters of order dividing $d$ on finite extensions of $\F_q$:

\begin{lemm} Let $d\geq 2$ be coprime to $q$, and $\F_{q^n}$ be the extension of degree $n$ of $\F_q$. 
Denote by $X(d, q^n)$ the set of nontrivial characters $\chi$ on $\F_{q^n}^\times$ such that $\chi^d=\trivcar$. 
Then
\[X(d, q^n) = \left\{ \tt_m^{(s)}, \ m\in \Z/d\Z\smallsetminus\{0\} \text{ and } s\geq 1 \text{ such that } s\cdot |\Mm| = n\right\}.\]
\end{lemm}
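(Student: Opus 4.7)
The plan is to establish both inclusions via the identity
\[\tt_m^{(s)}(x) = \tt(x)^{(q^n-1)m/d} \qquad \text{for all } x \in \F_{q^n}^\times,\]
which follows from the explicit formula $\norm_{q^n/q^{|\Mm|}}(x) = x^{(q^n-1)/(q^{|\Mm|}-1)}$ for the norm of a cyclic extension, combined with the definition of $\tt_m^{(s)}$. The exponent $(q^n-1)m/d$ is a well-defined integer because, by construction of $|\Mm|$, the integer $d/\gcd(d,m)$ divides $q^{|\Mm|}-1$ and therefore also divides $q^n-1$.

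For the ``$\supseteq$'' inclusion, take any $m \in \Z/d\Z\smallsetminus\{0\}$ and $s\geq1$ with $s\cdot|\Mm|=n$. The identity above exhibits $\tt_m^{(s)}$ as an integer power of $\tt$ restricted to $\F_{q^n}^\times$, hence as a character of $\F_{q^n}^\times$; its $d$-th power is $\tt^{(q^n-1)m} = \trivcar$, and it is nontrivial since $d\nmid m$ forces $(q^n-1)m/d \not\equiv 0 \pmod{q^n-1}$. Thus $\tt_m^{(s)} \in X(d, q^n)$.

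For the reverse inclusion, I would use that $\tt$ restricts to an isomorphism $\F_{q^n}^\times \simeq \mu_{q^n-1}$, so every character of $\F_{q^n}^\times$ is of the form $\tt^k$ for a unique $k \in \Z/(q^n-1)\Z$. Setting $D := \gcd(d, q^n-1)$, the condition $(\tt^k)^d = \trivcar$ is equivalent to $k$ being a multiple of $(q^n-1)/D$. Hence any $\chi \in X(d, q^n)$ can be written as $\chi = \tt^{j(q^n-1)/D}$ for some $j\in\{1,\dots,D-1\}$. I would then set $m := jd/D \in \{1,\dots,d-1\}$ (an integer since $D\mid d$, nonzero since $j<D$). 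A direct calculation yields $d/\gcd(d,m) = D/\gcd(D,j)$, which divides $D$ and hence divides $q^n-1$; the multiplicative order $|\Mm|$ of $q$ modulo $d/\gcd(d,m)$ therefore divides $n$, so $s := n/|\Mm|$ is a positive integer with $s\cdot|\Mm| = n$. The identity above then gives $\tt_m^{(s)} = \tt^{(q^n-1)m/d} = \tt^{j(q^n-1)/D} = \chi$, completing the argument.

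The main obstacle, such as it is, lies in the bookkeeping around divisibility: one must check that the exponent $(q^n-1)m/d$ is an integer in the forward direction and that $|\Mm|$ divides $n$ for the pair $(m,s)$ produced in the reverse direction. Both points are controlled by the defining property of $|\Mm|$ as the multiplicative order of $q$ modulo $d/\gcd(d,m)$, together with the simple observation that $D\mid q^n-1$.
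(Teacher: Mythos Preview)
Your proof is correct and follows essentially the same approach as the paper's: both establish the key identity $\tt_m^{(s)} = \tt^{(q^n-1)m/d}$ on $\F_{q^n}^\times$, then for the reverse inclusion write an arbitrary $\chi\in X(d,q^n)$ as $\tt^{j(q^n-1)/D}$ with $D=\gcd(d,q^n-1)$, set $m=jd/D$, and verify that $|\Mm|\mid n$ via the multiplicative-order characterization. Your version is slightly more explicit about the norm computation and the divisibility $d/\gcd(d,m)=D/\gcd(D,j)\mid q^n-1$, but the argument is the same.
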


\begin{proof} 
The characters $\tt_m^{(s)}$ appearing on the right-hand side all belong to $X(d,q^n)$ for they are all nontrivial characters on $\F_{q^n}^\times$ with order dividing $d$. 
To prove the converse inclusion, let $d_n = \gcd(d, q^n -1)$ and $\chi_0 = \tt^{(q^n-1)/d_n}$. 
By the discussion above, $\chi_0$ is a character on $\F_{q^n}^\times$ of exact order $d_n$. 
Thus, by cyclicity of the character group of $\F_{q^n}^\times$, $\chi_0$ generates the group $X(d, q^n)\cup\{\trivcar\}$. 
Hence, for any $\chi\in X(d, q^n)$, there is a unique $k\in\Z$ such that $1\leq k<d_n$ and $\chi = \chi_0^{k} = \tt^{(q^n-1)\cdot k/d_n}$.
Let $m=kd/d_n \in\Z$ and note that  $1\leq m<d$. 
By construction, $d$ divides $m (q^n-1)$ and we have $\chi = \tt^{(q^n-1)m/d}$. 
Recall that $|\Mm|$ is the multiplicative order of $q$ modulo $d/\gcd(d,m)$: by definition, this implies that $|\Mm|$ divides $n$ (see \S\ref{sec.q.act}), so that we can write $n = s \cdot |\Mm|$ for some $s\geq 1$.  

This shows that $\chi = \tt^{(q^{s\cdot |\Mm|}-1)m/d} = \tt^{(q^{|\Mm|}-1)m/d}\circ\norm_{\F_{q^n}/\F_{q^{|\Mm|}}}  = \tt_m^{(s)}$.
\ProofEnd\end{proof}

We will actually need the following, slightly more precise, result:

\begin{lemm}\label{lemm.reindex}
 Let $d\geq 2$ be coprime to $q$, and $\F_{q^n}$ be the extension of degree $n$ of $\F_q$. Denote by $X_3(d, q^n)$ the set of $\chi \in X(d,q^n)$ such that $\chi^3\neq\trivcar$. Then
\[X_3(d, q^n) = \left\{ \tt_m^{(s)}, \ m\in Z_d \text{ and } s\geq 1 \text{ such that } s\cdot |\Mm| = n\right\}.\]
\end{lemm}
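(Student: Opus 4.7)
The plan is to bootstrap from the previous lemma: we already have a complete enumeration of $X(d,q^n)$ by pairs $(m,s)$ with $m\in\Z/d\Z\smallsetminus\{0\}$, $s\geq 1$ and $s\cdot|\Mm|=n$. The task reduces to identifying which values of $m$ produce a character $\tt_m^{(s)}$ with $\chi^3\neq\trivcar$, and checking that these are precisely the elements of $Z_d$.

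To carry this out, I would first recall the order computation already mentioned in \S\ref{sec.char}: the character $\tt_m^{(s)}$ has the same order as $\tt_m$, which is $d/\gcd(d,m)$. Indeed, the norm $\norm_{q^{s|\Mm|}/q^{|\Mm|}}$ is surjective, so $\tt_m^{(s)}$ and $\tt_m$ have the same image, hence the same order. Next, the condition $(\tt_m^{(s)})^3=\trivcar$ is equivalent to saying that this common order divides $3$, i.e.\ $d/\gcd(d,m)\in\{1,3\}$.

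I would then analyse the two possibilities:
\begin{itemize}
\item $d/\gcd(d,m)=1$ means $d\mid m$; since we take $m\in\{1,\dots,d-1\}$ as the canonical lift, this case does not occur (it would correspond to $\tt_m^{(s)}=\trivcar$, which is already excluded from $X(d,q^n)$).
\item $d/\gcd(d,m)=3$ means $\gcd(d,m)=d/3$, which requires $3\mid d$, and then forces $m$ to be a nonzero multiple of $d/3$ in $\{1,\dots,d-1\}$, i.e.\ $m\in\{d/3,2d/3\}$.
\end{itemize}
Thus, $(\tt_m^{(s)})^3\neq\trivcar$ exactly when $m\notin\{0,d/3,2d/3\}$ in the case $3\mid d$, and when $m\neq 0$ otherwise; by the very definition of $Z_d$ at the start of \S\ref{sec.q.act}, this is precisely the condition $m\in Z_d$. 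Combined with the previous lemma, this gives the claimed description of $X_3(d,q^n)$.

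No step of this argument is genuinely difficult; the only thing to be careful about is the bookkeeping between ``$m$ as a residue class mod $d$'' and ``$m$ as an integer in $\{1,\dots,d-1\}$'', since the exceptional values $d/3,2d/3$ are only integers when $3\mid d$, which is exactly when they appear in the definition of $Z_d$. So the only ``obstacle'' is to present this case distinction cleanly.
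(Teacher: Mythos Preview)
Your proposal is correct and follows essentially the same approach as the paper: both reduce to the previous lemma and then use that the order of $\tt_m^{(s)}$ is $d/\gcd(d,m)$ to identify the $m$'s for which the cube is trivial. The only cosmetic difference is that the paper organises the argument by first splitting into the cases $3\nmid d$ (where $X_3(d,q^n)=X(d,q^n)$ and $Z_d=\Z/d\Z\smallsetminus\{0\}$ trivially) and $3\mid d$, whereas you let the case distinction emerge from the order analysis; the content is the same.
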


\begin{proof}
We distinguish two cases. First, if $3\nmid d$, there are no nontrivial character of order dividing $d$ whose third power is trivial (since $3$ and $d$ are coprime), so that $X_3(d, q^n) = X(d, q^n)$. 
On the other hand, $Z_d=\Z/d\Z\smallsetminus\{0\}$, and the preceding Lemma allows us to conclude in this case. 
In the remaining case when $3$ divides $d$, we have $Z_d = \Z/d\Z\smallsetminus\{0, d/3, 2d/3\}$ and $X_3(d,q^n) = X(d,q^n)\smallsetminus\{\chi : \chi^3=\trivcar\}$.
Since the order of $\tt_m^{(s)}$ for $m\in\Z/d\Z\smallsetminus\{0\}$ is exactly $d/\gcd(d,m)$, a direct inspection shows that $(\tt_m^{(s)})^3=\trivcar$ if and only if $m=d/3$ or $2d/3$.
Which gives the desired result. 
\ProofEnd\end{proof}

\begin{rema}\label{rem.easy.char}
In the special case when $d$ divides $q-1$, \ie{} when $q\equiv 1\bmod{d}$, the characters $\tt_m$ ($m\in Z_d$) are all characters of $\F_q^\times$ because $|\Mm|=1$. 
Since there are \emph{a priori} $|Z_d|$ nontrivial characters $\chi$ on $\F_q^\times$ such that $\chi^d=\trivcar$ and $\chi^3\neq\trivcar$, 
we have enumerated all possible such characters. 
\end{rema}

\subsection{Jacobi sums} 
\label{sec.jacobi}

Let $\F_r$ be a finite field of odd characteristic (in our applications, $\F_r$ will be a finite extension of $\F_q$).
We extend the (multiplicative) characters $\chi : \F_{r}^\times \to\Qbar^\times$ to the whole of $\F_{r}$ by setting $\chi(0) = 0$ if $\chi$ is not the trivial character $\trivcar$, and by $\trivcar(0)=1$.
For a character $\chi:\F_r^\times\to\Qbar^\times$ and an extension $\F_{r'}/\F_r$ of degree $s\geq 1$, whose norm is denoted by $\norm_{{r'}/r} : \F_{r'}\to\F_r$, we let $\chi^{(s)} := \chi\circ\norm_{{r'}/r}$ be the ``lifted'' character. 

To any triple of characters $\chi_1, \chi_2, \chi_3$ on $\F_{r}^\times$, we associate a Jacobi sum
\[\ja_{r}(\chi_1, \chi_2,\chi_3) := \sum_{\substack{ x_1, x_2, x_3 \in\F_{r}  \\ x_1+ x_2+x_3=1}} \chi_1(x_1)\chi_2(x_2)\chi_3(x_3).\]
Let us recall some classical facts about these sums (see \cite[\S2.5.3-\S2.5.4]{Cohen} for details and proofs). If $\chi_1$, $\chi_2$, $\chi_3$ and $\chi_1\chi_2\chi_3$ are all nontrivial, one has $|\ja_{r}(\chi_1, \chi_2, \chi_3)| = r$.
If $\chi_1$, $\chi_2$, $\chi_3$ are nontrivial but $\chi_1\chi_2\chi_3$ is trivial, the Jacobi sum ``degenerates'' to:
\begin{equation}\label{eq.jacobi.rel}
\ja_r(\chi_1, \chi_2, \chi_3) 
= -\chi_3(-1)\cdot\sum_{\substack{ x_1, x_2\in\F_{r}  \\ x_1+ x_2=1}} \chi_1(x_1)\chi_2(x_2).
\end{equation}
Jacobi sums satisfy the Hasse-Davenport relation (see \cite[\S3.7]{Cohen}): for  any finite extension $\F_{r'}/\F_r$ of degree $s$, and any characters $\chi_1, \chi_2, \chi_3$ on $\F_r^\times$, one has
\begin{equation}\label{eq.jacobi.HD}
\ja_{r'}(\chi_1^{(s)}, \chi_2^{(s)}, \chi_3^{(s)}) = \ja_r(\chi_1, \chi_2, \chi_3)^s.
\end{equation}


We finally introduce the following notation: 
\begin{defi}\label{defi.jacobi} For any $m\in Z_d$, we let
\begin{equation}\label{eq.defi.jacobi}
\Ja(m) := \ja_{q^{|\Mm|}}(\tt_m,\tt_m,\tt_m).
\end{equation}
Notice that $\Ja(m) = \Ja(q\cdot m)$ since $x\mapsto x^q$ is a bijection of $\F_{q^{|\Mm|}}$. For an orbit $\Mm\in\O_q(Z_d)$, we can thus define  $\Ja(\Mm):=\Ja(m)$ for any choice of $m\in\Mm$. 

\noindent By construction of $Z_d$, neither $\tt_m$ nor $(\tt_m)^3$ is trivial, so that  $|\Ja(\Mm)|=q^{|\Mm|}$ for all 
$\Mm\in\O_q(Z_d)$.
\end{defi}

\section[The L-function]{The $L$-function}  
\label{sec.Lfunc}

For any place $v$ of $K$, let $q_v$ be the cardinality of the residue field $\F_{q_v}$ at $v$, and denote by $(\widetilde{E_d})_v$ the reduction modulo $v$ of a minimal integral model of $E_d$ at $v$ (a plane cubic curve over $\F_{q_v}$).
By definition, the $L$-function of $E_d$ is the power series given by
\begin{equation}\label{eq.def.Lfunc}
 L(E_d/K, T) = \prod_{v\text{ good }} \left(1 - a_v \cdot T^{\deg v} + q_v \cdot  T^{2\deg v}\right)^{-1} \cdot
\prod_{v\text{ bad }} \left(1 - a_v \cdot  T^{\deg v}  \right)^{-1} \in\Z[[T]],
\end{equation}
where the products are over the places of $K$, and where $a_v := q_v +1 - |(\widetilde{E_d})_v(\F_{q_v})|$.
Remark that, when $E$ has bad reduction at $v$, $a_v$ is $0, +1$ or  $-1$ depending on whether the reduction of $E$ at $v$ is additive, split multiplicative or nonsplit multiplicative respectively. See \cite[Lect. 1, \S9]{UlmerParkCity} for more details. 
With the notations introduced in the previous section, we can now state our main result:

\begin{theo}\label{theo.Lfunc}
Let $d\geq 2$ be an integer coprime with $q$, and set
\[Z_d := \begin{cases}\Z/d\Z \smallsetminus\{0, d/3, 2d/3\} &\text{ if $d\equiv 0\bmod{3}$,} \\
 \Z/d\Z \smallsetminus\{0\} &\text{ otherwise}.
 \end{cases}
\]
The $L$-function of $E_d/K$ is given by
\begin{equation}\label{eq.Lfunc}
L(E_d/K, T)  = \prod_{\Mm \in\O_q(Z_d)} \left(1-\tt_m(-1)\Ja(\Mm) \cdot T^{|\Mm|}\right) \in\Z[T],
\end{equation}
where $\Ja(\Mm)$ is the Jacobi sum defined in \eqref{eq.defi.jacobi}.
\end{theo}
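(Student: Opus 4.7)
The plan is to evaluate the summed $\F_{q^n}$-point count on the elliptic surface $\Ecal_d\to\P^1$ as an explicit character sum and identify it with the logarithmic derivative of the claimed polynomial. First, using the identity $x^3+x^2-8t^dx+16t^{2d}=x^3+(x-4t^d)^2$, I would work with the short Weierstrass form $y^2=x^3+(x-4t^d)^2$ from Remark~\ref{rem.minimod.loc} and introduce
\[
N_n := \#\{(x,y,t)\in\F_{q^n}^3 : y^2 = x^3+(x-4t^d)^2\}
\]
for $n\geq 1$. Standard point-counting arguments, applied to $\Ecal_d$ together with the local data of Proposition~\ref{prop.badred}, should let me recover $-T\,(d/dT)\log L(E_d/K,T)$ from the generating series $\sum_n N_nT^n$, modulo explicit contributions from the singular fibers at $\{0\}\cup B_d\cup\{\infty\}$ and the points at infinity on each smooth fiber.

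The core step is to evaluate $N_n$ via character sums. Factoring the equation as $(y+x-4t^d)(y-x+4t^d)=x^3$ and setting $A=y+x-4t^d$, $B=y-x+4t^d$ converts the constraint into $AB=x^3$ coupled with $t^d=(2x-A+B)/8$. Counting $t\in\F_{q^n}$ satisfying $t^d=c$ by character orthogonality (with the convention $\trivcar(0):=1$) would produce
\[
N_n = q^{2n} + \sum_{\psi\in X(d,q^n)} \psi(-1)\,\ja_{q^n}(\psi,\psi,\psi).
\]
Concretely, I would parametrize $AB=x^3$ (for $x\neq 0$) by $(A,B,x)=(\alpha^2\beta,\alpha\beta^2,\alpha\beta)$ with $\alpha,\beta\in\F_{q^n}^\times$, then rescale $\alpha,\beta\to 2\alpha,2\beta$ to absorb the $\psi(8)^{-1}$-factor and substitute $\beta\mapsto-\beta$ to recognize each inner sum as $\psi(-1)\,\ja_{q^n}(\psi,\psi,\psi)$; the boundary contributions at $x=0$ vanish by orthogonality of $\psi$.

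By Lemma~\ref{lemm.reindex}, the characters $\psi\in X_3(d,q^n)$ are in bijection with pairs $(m,s)$ with $m\in Z_d$ and $s\cdot|\Mm|=n$, via $\psi=\tt_m^{(s)}$. The Hasse-Davenport relation \eqref{eq.jacobi.HD} gives $\ja_{q^n}(\psi,\psi,\psi)=\Ja(\Mm)^s$, and using $\norm_{q^n/q^{|\Mm|}}(-1)=(-1)^s$ one sees that $\psi(-1)=\tt_m(-1)^s$ depends only on the orbit $\Mm$. Grouping by orbits yields
\[
\sum_{\psi\in X_3(d,q^n)} \psi(-1)\,\ja_{q^n}(\psi,\psi,\psi) = \sum_{\substack{\Mm\in\O_q(Z_d)\\ |\Mm|\,\mid\, n}} |\Mm|\,(\tt_m(-1)\Ja(\Mm))^{n/|\Mm|},
\]
which matches the $T^n$-coefficient of $-T\,(d/dT)\log F(T)$ for $F(T):=\prod_{\Mm}(1-\tt_m(-1)\Ja(\Mm)\,T^{|\Mm|})$. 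The hard part will be reconciling the residual terms: the characters $\psi$ with $\psi^3=\trivcar$ (which arise only when $3\mid d$, where the Jacobi sum degenerates via \eqref{eq.jacobi.rel} to $-\ja_{q^n}(\psi,\psi)$), together with the correction terms from the first step---arising from the bad fibers of Proposition~\ref{prop.badred}, the points at infinity, and the fiber at $t=\infty$---must cancel, i.e. must account for the trace of Frobenius on the algebraic cycles in $\NS(\Ecal_d)$ that contribute to $\#\Ecal_d(\F_{q^n})$ but not to $L(E_d/K,T)$. A case-by-case analysis using Proposition~\ref{prop.badred} and the explicit structure of the bad fibers should establish this cancellation, yielding $F(T)=L(E_d/K,T)$; the integrality of the coefficients would follow from the Galois-equivariance of the orbit parametrization.
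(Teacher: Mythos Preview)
Your proposal is correct and follows the same overall strategy as the paper: express the coefficient of $T^n/n$ in $\log L(E_d/K,T)$ as $\sum_{\tau\in\P^1(\F_{q^n})}A_d(\tau,q^n)$, evaluate the finite-$\tau$ part as $-\sum_{\chi\in X(d,q^n)}\chi(-1)\ja_{q^n}(\chi,\chi,\chi)$, reindex via Lemma~\ref{lemm.reindex}, and invoke Hasse--Davenport. The genuine difference is in how you reach the Jacobi sum. The paper writes $A_d(\tau,q^n)=-\sum_x\lambda(f_\tau(x))$ using the quadratic character and then proves a dedicated identity (Proposition~\ref{prop.charsum}) via several substitutions. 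Your factorisation $(y+x-4t^d)(y-x+4t^d)=x^3$ with the parametrisation $(A,B,x)=(\alpha^2\beta,\alpha\beta^2,\alpha\beta)$ bypasses the Legendre symbol entirely and lands on $\ja_{q^n}(\chi,\chi,\chi)$ more directly; this is a cleaner route to the same formula. Where you overcomplicate matters is the ``hard part'': since the model $y^2=x^3+x^2-8t^dx+16t^{2d}$ is already minimal at every \emph{finite} place (Remark~\ref{rem.minimod.loc}), your $N_n$ satisfies $\sum_{\tau\in\F_{q^n}}A_d(\tau,q^n)=q^{2n}-N_n$ with no correction at $\{0\}\cup B_d$ whatsoever, and no appeal to $\NS(\Ecal_d)$ is needed. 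The only remaining term is $A_d(\infty,q^n)$, which the paper computes directly from the minimal model $y^2=x^3+1/4$ at $\infty$ (Proposition~\ref{prop.charsum.infty}): it equals $\sum_{\xi^3=\trivcar,\,\xi\neq\trivcar}\xi(-1)\ja_{q^n}(\xi,\xi,\xi)$ when $3\mid d$ and $0$ otherwise, and this visibly cancels the $\chi^3=\trivcar$ terms to leave exactly the sum over $X_3(d,q^n)$.
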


The rest of the section is devoted to the proof of this Theorem. 
Our strategy is inspired by that of \cite[Thm. 3.2.1]{Ulmer_legII}: we calculate $L(E_d/K, T)$  by  manipulations of character sums. 
Note that an alternative, more cohomological, computation could be conducted (along the lines of \cite[\S7]{Ulmer_LargeRk}, which treats a different family of elliptic curves). 
The latter approach would be less elementary, but would have the advantage of ``explaining'' the appearance of Jacobi sums in $L(E_d/K, T)$. Indeed, one would then rely on the fact that the minimal regular model $\Ecal_d\to\P^1$ of $E_d/K$ is dominated by a quotient of the Fermat surface $\ferm_d/\F_q$ of degree $d$ (see \cite{Ulmer_largeLrank} and \cite[Lect. 3, \S10]{UlmerParkCity}), whose zeta function is well-known and involves Jacobi sums.

\begin{rema} 
In the special case when $d$ divides $q-1$, the expression in \eqref{eq.Lfunc} above simplifies. 
Indeed, letting $\chi$ be a character on $\F_q^\times$ of exact order $d$, one has
\begin{equation}\label{eq.Lfunc.simple}L(E_d/K, T)  = \prod_{\substack{ 1\leq k \leq d \\ k\neq d/2}} \left(1-\chi(-1)^k\ja_{q}(\chi^k, \chi^k, \chi^k) \cdot T\right).
\end{equation}
This was stated as Theorem \ref{theo.i.Lfunc} in the introduction; 
it follows from Theorem \ref{theo.Lfunc} with Remarks \ref{rem.easy.action} and \ref{rem.easy.char}.
\end{rema}

\subsection{Character sums identities}

The proof of Theorem \ref{theo.Lfunc} requires two identities about character sums, which we first establish.

For any finite field $\F_r$ of odd characteristic, we denote by $\lambda : \F_r^\times\to\{\pm1\}$ the unique nontrivial character of order $2$ on $\F_{r}^\times$ (the ``Legendre symbol'' of $\F_{r}$), 
extended by $\lambda(0)=0$.

\begin{prop}\label{prop.charsum}
 Let $\F_r$ be a finite field of odd characteristic. For any character $\chi:\F_r^\times\to\Qbar^\times$, 
\begin{equation}\label{eq.charsum}
\sum_{z\in\F_r}\sum_{x\in\F_r} \chi(z)\cdot\lambda(x^3+x^2-8zx+16z^2) =
\begin{cases}
0 & \text{ if $\chi$ is trivial,} \\
\chi(-1)\cdot\ja_r(\chi, \chi, \chi) & \text{ otherwise.}
\end{cases}
\end{equation}
\end{prop}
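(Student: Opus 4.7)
The plan uses the algebraic identity
\[
f(x,z) \;:=\; x^3 + x^2 - 8zx + 16z^2 \;=\; x^3 + (x - 4z)^2,
\]
which reflects the rational $3$-torsion point on the short Weierstrass form of $E_d$ described in Remark~\ref{rem.minimod.loc}: indeed the tangent to the curve at $(0,-4z)$ is $y = x - 4z$, and subtracting its square from $f$ leaves $x^3$. I would first dispose of the case $\chi = \trivcar$ by direct evaluation: splitting off $z = 0$ (where $\lambda(x^3 + x^2) = \lambda(x+1)$ for $x \ne 0$ contributes $-1$) and, for $z \ne 0$, applying the bijection $x = 4zu$ to rewrite $f(x,z) = 16 z^2\cdot(4zu^3 + (u-1)^2)$, so that the square factor $16z^2$ disappears under $\lambda$. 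A short piecewise evaluation over $u = 0$, $u = 1$ and $u \notin \{0,1\}$ then contributes $+1$ in total, which cancels the $z = 0$ piece and gives the claimed vanishing.

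For $\chi$ nontrivial, the $z = 0$ term vanishes automatically. Using the factorization above, I would substitute $w = x - 4z$ (a bijection in $z$ for each fixed $x$), which reshapes the double sum as
\[
\chi(4)^{-1} \sum_{x, w \in \F_r} \chi(x - w)\, \lambda\bigl(x^3 + w^2\bigr).
\]
The $x = 0$ contribution vanishes because $\lambda(w^2) = 1$ on $\F_r^\times$ while $\chi$ is nontrivial. For $x \ne 0$, the rescaling $w = xs$ factors $x^3 + w^2 = x^2(x + s^2)$, so $\lambda(x^3 + w^2) = \lambda(x + s^2)$, and swapping the order of summation reduces the double sum to
\[
\chi(4)^{-1} \sum_{s \in \F_r} \chi(1 - s)\sum_{x \in \F_r^\times} \chi(x)\,\lambda(x + s^2).
\]
The $s = 0$ slice of the outer sum contributes $r-1$ when $\chi = \lambda$ and vanishes otherwise. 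For $s \ne 0$, the further rescaling $x = s^2 y$ identifies the inner sum with $\chi^2(s)\cdot \chi(-1)\,\ja_r(\lambda, \chi)$, recognizing $\sum_y \chi(y)\lambda(y+1)$ as $\chi(-1)\ja_r(\lambda, \chi)$. The outer sum $\sum_{s\ne 0} \chi(1-s)\chi^2(s)$ is then, up to the correction $\chi^2(0) = 1$ when $\chi^2$ is trivial, the Jacobi sum $\ja_r(\chi, \chi^2)$.

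The last step combines the two Jacobi sums via classical identities: the Hasse--Davenport duplication formula yields $\chi(4)^{-1}\ja_r(\chi, \lambda) = \ja_r(\chi, \chi)$ whenever $\chi$ is neither $\trivcar$ nor $\lambda$, and the Gauss sum product relation $g(\chi)^3/g(\chi^3) = \ja_r(\chi,\chi)\,\ja_r(\chi, \chi^2)$ yields $\ja_r(\chi,\chi)\,\ja_r(\chi, \chi^2) = \ja_r(\chi, \chi, \chi)$ whenever $\chi, \chi^2, \chi^3$ are all nontrivial. In the generic case both apply and produce exactly $\chi(-1)\ja_r(\chi,\chi,\chi)$, as required.

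The main obstacle I foresee is the bookkeeping of two degenerate situations: when $\chi = \lambda$ (so that $\chi^2 = \trivcar$ and the duplication formula breaks down), and when $\chi$ has exact order $3$ (so that $\chi^3 = \trivcar$ and the product relation breaks down). In each case one of the two Jacobi sums must be replaced by its degenerate evaluation \eqref{eq.jacobi.rel}, and the edge corrections coming from $\chi^2(0) = 1$ and from the $s = 0$ slice must be shown to combine consistently with the right-hand side of \eqref{eq.charsum}. I expect this to be an elementary but finicky case-by-case verification, relying on $\ja_r(\lambda, \lambda) = -\lambda(-1)$ and $\ja_r(\chi, \chi^{-1}) = -\chi(-1)$; in each degenerate case a short arithmetic check confirms the identity.
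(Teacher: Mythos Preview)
Your argument is correct, and its first half is essentially identical to the paper's: both recognize the factorization $f(x,z)=x^3+(x-4z)^2$ and, after substituting $z\mapsto (x-w)/4$ and then rescaling $w=xs$ for $x\neq 0$, arrive at
\[
S_r(\chi)=\chi(4)^{-1}\sum_{s}\chi(1-s)\sum_{x\neq 0}\chi(x)\,\lambda(x+s^2)
\]
(the paper writes this with $y=-s$, so $\chi(y+1)$ in place of $\chi(1-s)$). The divergence is in how the inner sum is handled. You rescale $x=s^2y$ to extract a two-variable Jacobi sum $\chi(-1)\ja_r(\lambda,\chi)$, then identify the outer sum as $\ja_r(\chi,\chi^2)$, and finally assemble the triple Jacobi sum via the duplication identity $\chi(4)^{-1}\ja_r(\chi,\lambda)=\ja_r(\chi,\chi)$ together with $\ja_r(\chi,\chi)\ja_r(\chi,\chi^2)=\ja_r(\chi,\chi,\chi)$. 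This is valid, but it forces the case analysis you anticipate when $\chi=\lambda$ or $\chi$ has order~$3$. The paper instead applies the ``count the square roots'' trick $1+\lambda(w)=|\{t:t^2=w\}|$ to rewrite $\sum_x\chi(x)\lambda(x+y^2)=\sum_t\chi(t-y)\chi(t+y)$, and then the linear bijection $(y,t)\mapsto\bigl(\tfrac{t-y}{2},\tfrac{-(t+y)}{2},y+1\bigr)$ onto the hyperplane $x_1+x_2+x_3=1$ produces $\chi(-1)\ja_r(\chi,\chi,\chi)$ directly, with no degenerate cases and no appeal to the duplication or product formulas. Your route trades a single change of variables for two classical Jacobi-sum identities plus bookkeeping; the paper's route is shorter and uniform in~$\chi$.
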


\begin{proof} Let $S_r(\chi)$ denote the double sum on the left-hand side of \eqref{eq.charsum}. We first put 
$z'=4z$ in the outer sum, exhange the order of summation and split the sum according to whether $x=0$ or not: we obtain that
\begin{align*}
S_r(\chi)
&=\chi^{-1}(4) \cdot\sum_{z'\in\F_r} \chi(z')\cdot\lambda(z')^2 
+ \chi^{-1}(4)\cdot \sum_{x\neq 0} \sum_{z'\in\F_r} \chi(z') \cdot \lambda(x^3+x^2 -2z'x+{z'}^2) \\
&=\chi^{-1}(4) \cdot\sum_{z'\in\F_r^\times} \chi(z')
+ \chi^{-1}(4)\cdot \sum_{x\neq 0}\left( \sum_{z'\in\F_r} \chi(z') \cdot \lambda(x^3+(x-{z'})^2) \right)
\end{align*}
To treat the sum over $x\neq0$, note the following: for a given $x\neq0$, by writing $z' =x(y+1)$, we obtain that 
\[\sum_{z'\in\F_r} \chi(z') \cdot \lambda(x^3+(x-{z'})^2) 
= \chi(x)\cdot \sum_{y\in\F_r} \chi(y+1)\cdot \lambda(x+y^2).\]
Summing this identity over all $x\neq 0$  and exchanging the order of summation yields that
\[S_q(\chi) = \chi^{-1}(4) \cdot \sum_{z'\neq 0} \chi(z') +
\chi^{-1}(4) \cdot \sum_{y\in\F_r} \chi(y+1) \left(\sum_{x\neq 0} \chi(x) \lambda(x+y^2)\right).\]
If $\chi$ is trivial, a straightforward evaluation of the sums leads to the desired result: $S_r(\chi)=0$. From now on, we thus  assume that $\chi$ is nontrivial: in which case, $\sum_{z'\neq 0}\chi(z')=0$ and $\chi(0)=0$, so that the last displayed identity reads:
\[S_r(\chi) = \chi^{-1}(4)\cdot \sum_{y\in\F_r} \chi(y+1) \left(\sum_{x\in\F_r} \chi(x) \lambda(x+y^2)\right).\]
Recall that $1+\lambda(w) = |\{t\in\F_r : t=w^2\}|$ for all $w\in\F_r$.
Thus, for any $y\in\F_r$, one can rewrite the sums over $x$ under the form:
\[
\sum_{x\in\F_r} \chi(x) \lambda(x+y^2) 
= \sum_{x\in\F_r} \chi(x)\cdot (1+ \lambda(x+y^2) ) 
= \sum_{x\in\F_r} \chi(x) \cdot \left|\{t\in\F_q: x=t^2-y^2\}\right|
= \sum_{t\in\F_r} \chi(t^2-y^2).\]
This leads to
\[ S_r(\chi) = \chi^{-1}(4) \cdot \sum_{(y,t)\in\F_r^2} 
\chi(t-y)\chi(t+y)\chi(y+1).\]
The map 
$ \F_r^2\to \ \left\{(x_1, x_2, x_3)\in\F_r^3 : x_1+x_2+x_3=1\right\}$ given by $(y,t)\mapsto\left(\frac{t-y}{2}, \frac{-(y+t)}{2}, y+1\right)$
is a bijection, and this allows us to write the latter double sum as a Jacobi sum: 
\[\sum_{(y,t)\in\F_r^2} \chi(t-y)\chi(t+y)\chi(y+1) 
= \chi(-4)\cdot\sum_{\substack{  x_1+x_2+ x_3=1, \\ x_i\in\F_r}} \chi(x_1)\chi(x_2)\chi(x_3) = \chi(-4) \cdot \ja_r(\chi, \chi, \chi).\]
Therefore, we have proved that $S_r(\chi) = \chi(-1)\cdot\ja_{r}(\chi, \chi, \chi)$. This concludes the proof.
\ProofEnd\end{proof}

\begin{prop}\label{prop.charsum.infty}
 Let $\F_r$ be a finite field of odd characteristic, and $a\in\F_r^\times$. 
Then 
\begin{equation}\label{eq.charsum.infty}
\sum_{x\in\F_r} \lambda(x^3+a) = - \lambda(a) \cdot 
\sum_{\substack{\xi^3=\trivcar \\ \xi \neq\trivcar}} \xi(4a)\cdot\ja_r(\xi, \xi,\xi),
\end{equation}
where the sum on the right-hand side is over nontrivial characters $\xi:\F_r^\times\to\Qbar^\times$ of order $3$.
\end{prop}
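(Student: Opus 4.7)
The plan is to resolve the cube in $x^3 + a$ via cubic characters and then reshape the resulting character sum into the stated Jacobi sums. Starting from the orthogonality relation $\#\{x \in \F_r : x^3 = u\} = \sum_{\xi^3 = \trivcar} \xi(u)$ (valid for all $u \in \F_r$ with the conventions $\xi(0) = 0$ for $\xi \neq \trivcar$ and $\trivcar(0) = 1$), one rewrites the left-hand side of \eqref{eq.charsum.infty} as
\[ \sum_{x \in \F_r} \lambda(x^3 + a) = \sum_{u \in \F_r} \lambda(u+a) \cdot \#\{x : x^3 = u\} = \sum_{\xi^3 = \trivcar} \sum_{u \in \F_r} \xi(u) \lambda(u + a). \]
The contribution of $\xi = \trivcar$ vanishes since $\sum_u \lambda(u+a) = \sum_v \lambda(v) = 0$, leaving only the nontrivial cubic characters. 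If $3 \nmid r - 1$ no such characters exist, and both sides of \eqref{eq.charsum.infty} vanish trivially (the cube map being a bijection of $\F_r$), so one may assume $3 \mid r - 1$ henceforth.

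For each nontrivial cubic $\xi$, the substitution $u = -a w$ (legitimate since $a \neq 0$), together with the elementary observation that $\xi(-1) = 1$ (because $\xi(-1) \in \mu_3 \cap \{\pm 1\} = \{1\}$), recasts the inner sum as
\[ T(\xi) := \sum_u \xi(u)\lambda(u+a) = \xi(a) \lambda(a) \, \ja_r(\xi, \lambda), \]
where $\ja_r(\xi, \lambda) = \sum_w \xi(w)\lambda(1-w)$ is the $2$-parameter Jacobi sum.

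The crux of the argument is then to relate $\ja_r(\xi, \lambda)$ to $\ja_r(\xi, \xi, \xi)$. By \eqref{eq.jacobi.rel} combined with $\xi(-1) = 1$, one has $\ja_r(\xi, \xi, \xi) = -\ja_r(\xi, \xi)$, so it suffices to establish the classical identity
\[ \ja_r(\xi, \lambda) = \xi(4) \, \ja_r(\xi, \xi) \]
for every nontrivial cubic character $\xi$ on $\F_r^\times$. This identity can be derived from the Hasse--Davenport product formula for Gauss sums, or more elementarily from computing $\sum_y \xi(y^2 - 1)$ in two different ways: once by writing $1 + \lambda(v) = \#\{y : y^2 = v\}$ (which, after a substitution similar to the one above, yields a multiple of $\ja_r(\xi, \lambda)$), and once by factoring $y^2 - 1 = (y-1)(y+1)$ and applying a M\"obius-type substitution such as $s = 2/(y+1)$ (which yields, using $\xi^{-2} = \xi$ for cubic $\xi$, a multiple of $\xi(4) \ja_r(\xi, \xi)$). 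This identification step is the only non-mechanical ingredient; once it is granted, substituting $\ja_r(\xi, \lambda) = -\xi(4) \ja_r(\xi, \xi, \xi)$ into the formula for $T(\xi)$ gives $T(\xi) = -\xi(4a)\lambda(a)\ja_r(\xi, \xi, \xi)$, and summing over nontrivial cubic $\xi$ produces the claimed formula \eqref{eq.charsum.infty}.
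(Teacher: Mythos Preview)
Your proposal is correct and follows essentially the same route as the paper: both start from the cube-counting identity $\#\{x : x^3=u\}=\sum_{\xi^3=\trivcar}\xi(u)$, eliminate the trivial-character contribution, reduce the remaining terms to the two-variable Jacobi sum $\ja_r(\xi,\lambda)$, and then use the identity $\ja_r(\xi,\lambda)=\xi(4)\,\ja_r(\xi,\xi)$ together with \eqref{eq.jacobi.rel}. The only difference is cosmetic: the paper cites that last Jacobi-sum identity from Cohen (Prop.~2.5.18), whereas you sketch an elementary proof of it via the double evaluation of $\sum_y \xi(y^2-1)$.
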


\begin{proof} 
The sum over $\xi$ in \eqref{eq.charsum.infty} contains $2$ or $0$ terms, depending on whether $3$ divides $|\F_r^\times|=r-1$ or not, respectively. 
For any $z\in\F_r$, one has $\left|\{x\in\F_{r} :  x=z^3 \}\right| = \sum_{\xi^3=\trivcar} \xi(z)$, where the sum is over characters $\xi$ on $\F_r^\times$ such that $\xi^3=\trivcar$ (see \cite[Lemma 2.5.21]{Cohen}). Therefore
\begin{equation}\label{eq.charsum.interm}
\sum_{x\in\F_{r}}\lambda(x^3+a)
= \sum_{z\in\F_{r}} \left|\{x\in\F_{r} :   x=z^3 \}\right|\cdot \lambda(z+a)
= \sum_{z\in\F_{r}}\lambda(z+a)  + \sum_{\substack{\xi^3=\trivcar \\ \xi\neq \trivcar}}\left(\sum_{z\in\F_{r}}\xi(z)\lambda(z+a)\right).\end{equation}
The first sum on the right-hand side vanishes because $\lambda$ is nontrivial.
If $3$ does not divide $|\F_{r}^\times|$, we are done since the sum over $\xi$ is empty. 
In the case where $3$ divides $|\F_{r}^\times|$, let $\xi$ be one of the two characters of exact order $3$ on $\F_{r}^\times$. Then,
\begin{align*}
\sum_{z\in\F_{r}}\xi(z)\lambda(z+a)
&= \sum_{\substack{x_1, x_2\in\F_r \\ x_1+x_2 = 1}} \xi(-ax_1)\lambda(ax_2) 
= \xi(-a)\lambda(a) \cdot\sum_{\substack{x_1, x_2\in\F_r \\ x_1+x_2 = 1}} \xi(x_1)\lambda(x_2) \\
&= \xi(-4a)\lambda(a) \cdot \sum_{\substack{x_1, x_2\in\F_r \\ x_1+x_2 = 1}} \xi(x_1)\xi(x_2) 
=-\xi(4a)\lambda(a)\cdot\ja_{r}(\xi, \xi, \xi).
\end{align*}
The penultimate equality follows from \cite[Prop. 2.5.18]{Cohen}, and the last one from \eqref{eq.jacobi.rel} because $\xi^3=\trivcar$. 
Plugging this result twice into \eqref{eq.charsum.interm} finishes the proof.
\ProofEnd\end{proof}

\subsection{Proof of Theorem \ref{theo.Lfunc}}
\label{sec.Lfunc.calc}

From the definition \eqref{eq.def.Lfunc} of the $L$-function, expanding $\log L(E_d/K, T)$ as a power series and rearranging terms as in \cite[\S 3.2]{Ulmer_legII}, one arrives at the following expression of $L(E_d/K, T)$:

\begin{lemm}
 Let $n\geq1$. For any $\tau\in\P^1(\F_{q^n})$, denote by $v_\tau$ the place of $K$ corresponding to $\tau$, and by $(\widetilde{E_d})_\tau$ the reduction of a integral minimal model of $E$ at $v_\tau$. 
 For all $n\geq 1$ and any $\tau\in\P^1(\F_{q^n})$, we let
$A_d(\tau, q^n) := q^n+1 - |(\widetilde{E_d})_\tau(\F_{q^n})|.$
Then, the $L$-function of $E_d/K$ satisfies the formal identity 
\begin{equation}\label{eq.practical.Lfunc}
\log L(E_d/K, T) = \sum_{n=1}^{\infty} \left(\sum_{\tau\in\P^1(\F_{q^n})}A_d(\tau, q^n)\right) \cdot \frac{T^n}{n}.
\end{equation}
\end{lemm}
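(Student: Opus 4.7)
The plan is to take the logarithm of each local Euler factor appearing in \eqref{eq.def.Lfunc}, expand as a power series in $T$, and then reindex the resulting double sum by grouping together terms whose monomial has degree $n = m\cdot\deg v$.

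First, at a good place $v$, I would factor the Euler factor as $1 - a_v T^{\deg v} + q_v T^{2\deg v} = (1-\alpha_v T^{\deg v})(1-\beta_v T^{\deg v})$ with $\alpha_v+\beta_v = a_v$ and $\alpha_v\beta_v = q_v$. The classical Lefschetz trace formula for the elliptic curve $(\widetilde{E_d})_v/\F_{q_v}$ (equivalently, Hasse's theorem) gives $|(\widetilde{E_d})_v(\F_{q_v^m})| = q_v^m+1-\alpha_v^m-\beta_v^m$ for every $m\geq 1$. At a bad place $v$, a direct case-by-case count on the smooth locus of $(\widetilde{E_d})_v$ (the additive group in the additive case, a split torus in the split multiplicative case, a nonsplit torus in the nonsplit multiplicative case) yields the analogous identity $|(\widetilde{E_d})_v(\F_{q_v^m})| = q_v^m+1-a_v^m$ for $a_v\in\{0,+1,-1\}$. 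In either case, setting $A_d(v,m):=q_v^m+1-|(\widetilde{E_d})_v(\F_{q_v^m})|$, the power series identity $-\log(1-x)=\sum_{m\geq 1}x^m/m$ gives
\[ -\log(\text{local factor at } v) = \sum_{m\geq 1} \frac{A_d(v,m)}{m}\,T^{m\deg v}. \]

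Summing over all places of $K$ and collecting the coefficient of $T^n/n$ then produces
\[ \log L(E_d/K, T) = \sum_{n\geq 1}\frac{T^n}{n}\sum_{\substack{(v,m)\\ m\,\deg v = n}} \deg(v)\cdot A_d(v,m). \]
To convert the inner sum into a sum over $\tau\in\P^1(\F_{q^n})$, I would invoke the standard bijection between $\P^1(\F_{q^n})$ and the set of pairs $(v,\iota)$, where $v$ is a closed point of $\P^1_{\F_q}$ with $\deg v\mid n$ and $\iota:\F_{q_v}\hookrightarrow\F_{q^n}$ is an embedding of the residue field: each such $v$ contributes exactly $\deg v$ points $\tau$ above it, and for every such $\tau$ one has $q^n = q_v^m$ and $|(\widetilde{E_d})_\tau(\F_{q^n})|=|(\widetilde{E_d})_v(\F_{q_v^m})|$, where $m=n/\deg v$. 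Hence $A_d(\tau,q^n)=A_d(v,m)$, and the inner sum collapses to $\sum_{\tau\in\P^1(\F_{q^n})}A_d(\tau,q^n)$, which is the desired formula.

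The whole argument is bookkeeping on formal power series combined with the place-vs-point dictionary for $\P^1_{\F_q}$; there is no genuine obstacle. The only mild subtlety to handle carefully is the bad-place case, where one must check that the identity $A_d(v,m) = a_v^m$ really does hold for all $m\geq 1$ (and not just for $m=1$), so that the bad and good Euler factors fit seamlessly into a single uniform expansion. Once this is verified by a brief case analysis, the reindexing step is automatic.
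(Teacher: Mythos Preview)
Your argument is correct and is precisely the standard expansion-and-reindexing computation that the paper has in mind; the paper itself does not spell out a proof but simply refers to \cite[\S3.2]{Ulmer_legII} for this manipulation. Your treatment of the bad places (verifying $A_d(v,m)=a_v^m$ for all $m\geq 1$ by the torus/additive-group dichotomy) and the closed-point-versus-geometric-point bookkeeping is exactly what that reference unpacks.
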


Our first step will be to find a more explicit expression of the inner sums in \eqref{eq.practical.Lfunc}.
For any finite extension $\F_{q^n}$ of $\F_q$, we again denote by $\lambda :\F_{q^n}^\times\to\{\pm1\}$ 
(or by $\lambda_{q^n}$ when confusion is possible) 
the quadratic character on $\F_{q^n}^\times$. 
For any $\tau\in\P^1(\F_{q^n})$, we fix an affine model $y^2=f_\tau(x)$ of $(\widetilde{E_d})_\tau$ 
(with $f_\tau(x)\in\F_{q^n}[x]$ of degree~$3$). 
A standard computation yields that 
\begin{equation}\label{eq.Atau}
A_d(\tau, q^n)  
= q^n+1 - |\widetilde{(E_d)}_\tau(\F_{q^n})| 
= q^n - \sum_{x\in\F_{q^n}} \left(1 + \lambda\left( f_\tau(x)\right) \right) 
=- \sum_{x\in\F_{q^n}}  \lambda\left( f_\tau(x)\right).  
\end{equation}
The value of $A_d(\infty, q^n)$ depends on the reduction of $E_d$ at $\tau=\infty$  which, by Proposition \ref{prop.badred}, is as follows.  
When $d$ is not divisible by $3$,   $E_d$ has  additive reduction at $\infty$ and $(\widetilde{E_d})_{\infty}$ is a rational curve over $\F_{q}$, whence $A_d(\infty, q^n)= 0$ in that case. 
When $3$ divides $d$, $E_d$ has good reduction at $\infty$ and the reduced curve $\widetilde{(E_d)}_\infty$ has affine model $y^2=x^3+1/4$ 
over $\F_q$ (see Remark \ref{rem.minimod.loc}). Therefore, by \eqref{eq.Atau} 
and Proposition \ref{prop.charsum.infty} (with $r=q^n$ and $a=1/4$), one has
\[A_d(\infty, q^n) = -\sum_{x\in\F_{q^n}} \lambda(x^3+1/4) 
=\sum_{\substack{\xi^3=\trivcar \\ \xi\neq\trivcar}} \ja_{q^n}(\xi, \xi, \xi) 
= \sum_{\substack{\xi^3=\trivcar \\ \xi\neq\trivcar}} \xi(-1)\cdot\ja_{q^n}(\xi, \xi, \xi),\]
where the sum is over nontrivial characters $\xi$ of $\F_{q^n}^\times$ such that $\xi^3=\trivcar$ (note that $\xi(-1)=1$ for such a $\xi$).

Next for any $\tau\in\F_{q^n}$, as was noted in Remark \ref{rem.minimod.loc}, one can take $f_\tau(x) = x^3 + x^2 -8\tau^d \cdot x +16 \tau^{2d}$ and \eqref{eq.Atau} here leads to
\[
A_d(\tau, q^n)  
=- \sum_{x\in\F_{q^n}}  \lambda\left( x^3 + x^2 -8\tau^d \cdot x +16 \tau^{2d}\right).  
\]
For any $z\in\F_{q^n}$, one has $\left|\left\{\tau\in\F_{q^n} : \tau^d = z\right\}\right| = \sum_{\chi^d =\trivcar} \chi(z)$, where the sum is over all characters $\chi$ of $\F_{q^n}^\times$ such that $\chi^d=\trivcar$ (see \cite[Lemma 2.5.21]{Cohen}). 
After exchanging order of summation, we obtain that
\begin{align*}
\sum_{\tau \in\F_{q^n}} A(\tau, q^n) 
&= -\sum_{\tau\in\F_{q^n}} \sum_{x\in\F_{q^n}}  \lambda\left( x^3 + x^2 -8\tau^d \cdot x +16 \tau^{2d}\right) \\
&= -\sum_{z\in\F_{q^n}} \left(\left|\left\{\tau\in\F_{q^n} : \tau^d = z\right\}\right|\cdot\sum_{x\in\F_{q^n}}  \lambda\left( x^3 + x^2 -8z  x +16 z^2\right) \right)\\
&= - \sum_{\chi^d=\trivcar} \left( \sum_{z\in\F_{q^n}}   \sum_{x\in\F_{q^n}} \chi(z) \lambda\left( x^3 + x^2 -8z  x +16 z^2\right)\right).
\end{align*}
Using Proposition \ref{prop.charsum} on the inner sums (with $r=q^n$), we find that 
\[ \sum_{\tau \in\P^1(\F_{q^n})} A(\tau, q^n) 
= A(\infty, q^n)  - \sum_{\chi\in X(d, q^n)} \chi(-1)\cdot\ja_{q^n}(\chi, \chi, \chi),\]
where $X(d,q^n)$ is the set of nontrivial characters $\chi$ on $\F_{q^n}^\times$ such that $\chi^d=\trivcar$. 
By our expression of $A(\infty, q^n)$, 
it follows that
\begin{equation}\label{eq.pt.count}\notag
\sum_{\tau\in\P^1(\F_{q^n})} A_d(\tau, q^n) 
= \left\{ \begin{array}{ccl}
 \displaystyle\sum_{\substack{\xi^3=\trivcar \\ \xi\neq\trivcar}} \xi(-1)\cdot\ja_{q^n}(\xi, \xi, \xi) 
 &  \displaystyle- \sum_{\chi \in X(d, q^n)} \chi(-1)\cdot\ja_{q^n}(\chi, \chi, \chi)  &\text{if } 3\mid d, \\
 0 
 & \displaystyle - \sum_{\chi \in X(d, q^n)} \chi(-1)\cdot\ja_{q^n}(\chi, \chi, \chi)  
 &\text{else.}
 \end{array}
\right.
\end{equation}
In both cases, one can rewrite this as
\begin{equation}\notag
\sum_{\tau\in\P^1(\F_{q^n})} A_d(\tau, q^n) 
= - \sum_{\chi \in X_3(d, q^n)} \chi(-1)\cdot\ja_{q^n}(\chi, \chi, \chi), 
\end{equation}
where the sum is over the set $X_3(d, q^n)$ of nontrivial characters $\chi$ on $\F_{q^n}^\times$ such that $\chi^d=\trivcar$ and $\chi^3\neq \trivcar$ (see \S\ref{sec.char}).
Plugging this last identity 
into \eqref{eq.practical.Lfunc}, we obtain that
\begin{equation}\label{eq.proofL.step2}
- \log L(E_d/K, T) = \sum_{n\geq 1}\left( \sum_{\chi\in X_3(d, q^n)}  \chi(-1)\cdot\ja_{q^n}(\chi, \chi, \chi) \right) \cdot \frac{T^n}{n}.
\end{equation}

We now perform a ``reindexation'' on this double sum: Lemma \ref{lemm.reindex} allows us to rewrite \eqref{eq.proofL.step2} under the form:
\[\sum_{n\geq 1}\left( \sum_{\chi\in X_3(d,q^n)} \chi(-1)\cdot\ja_{q^n}(\chi, \chi, \chi) \right) \cdot \frac{T^n}{n} = \sum_{m\in Z_d} \left(\sum_{s\geq 1}    \  \tt_m^{(s)}(-1)\cdot\ja_{q^{s\cdot |\Mm|}}(\tt_m^{(s)}, \tt_m^{(s)}, \tt_m^{(s)}) \cdot \frac{T^{s\cdot |\Mm|}}{s\cdot |\Mm|}\right).\]
Further, $\tt_m^{(s)}(-1) = \tt_m(-1)^s$ and the Hasse-Davenport relation \eqref{eq.jacobi.HD} 
implies that
\[\forall m\in Z_d, \forall s\geq 1, \quad \ja_{q^{s\cdot |\Mm|}}(\tt_m^{(s)}, \tt_m^{(s)}, \tt_m^{(s)}) =  \ja_{q^{ |\Mm|}}(\tt_m, \tt_m, \tt_m)^{s} = \Ja(m)^s.\]
Therefore, we derive that
\begin{align*}
\sum_{n\geq 1}\left( \sum_{\chi\in X_3(d,q^n)} \chi(-1)\cdot\ja_{q^n}(\chi, \chi, \chi) \right) \cdot \frac{T^n}{n} 
&= \sum_{m\in Z_d} \sum_{s\geq 1}     \frac{\left(\tt_m(-1)\Ja(m)\cdot T^{ |\Mm|} \right)^s}{s\cdot |\Mm|} \\
&= - \sum_{m\in Z_d} \frac{ 1}{|\Mm|}\cdot \log\left(1-\tt_m(-1)\Ja(m)\cdot T^{|\Mm|}\right).
\end{align*}
In the right-most sum, notice that each term ``$\log\left(1-\tt_m(-1)\Ja(m)\cdot T^{|\Mm|}\right)$'' appears $|\Mm|$ times  since ${\Ja(q^k\cdot m) = \Ja(m)}$ for all $k\geq 1$. 
Thanks to the ``weighting'' by $1/|\Mm|$, we may thus write the sum over $m\in Z_d$ as a sum over $\Mm\in\O_q(Z_d)$.
Finally, we have proved
\[\log L(E_d/K, T) = \sum_{\Mm\in\O_q(Z_d)} \log\left(1- \tt_m(-1)\Ja(m)\cdot T^{|\Mm|}\right).\]
It remains to exponentiate this identity to complete the proof of Theorem \ref{theo.Lfunc}.
\ProofEnd

\section{Bounds on the special value}
\label{sec.spval.bnd}

We now  study in more detail the behaviour of $L(E_d/K, T)$ around the point $T=q^{-1}$. 
More specifically, recall that $\rho(E_d/K) := \ord_{T=q^{-1}} L(E_d/K, T)$ is called the \emph{analytic rank} of $E_d$, 
and  that  the \emph{special value of $L(E_d/K, T)$ at $T=q^{-1}$} is  the quantity  
\begin{equation}\label{eq.def.spval}
L^\ast(E_d/K, 1) := \left. \frac{L(E_d/K, T)}{(1-qT)^\rho}\right|_{T=q^{-1}} \in \Z[q^{-1}]\smallsetminus\{0\} 
\quad \text{ where } \rho=\rho(E_d/K).
 \end{equation}
 
\begin{rema} 
The special value is ``usually'' defined as the first nonzero coefficient in the Taylor expansion of $s\mapsto L(E_d/K, q^{-s})$  around $s=1$: our definition \eqref{eq.def.spval} differs from this more ``usual'' one by a factor $(\log q)^\rho$, which we prefer to avoid in order to ensure that $L^\ast(E_d/K, 1)\in\Q^\ast$. 
Note that this is consistent with our normalization of $\Reg(E_d/K)$ (see \S\ref{sec.BS} below).
\end{rema}

The goal of this section is to give an asymptotic  estimate on the size of $L^\ast(E_d/K, 1)$ in terms of the height $H(E_d/K)$, as $d$ grows to $+\infty$. By a rather crude estimate, as in \cite[\S7]{HP15} for example, one readily obtains that 
\[ - 5 +o(1)
\leq \frac{\log L^\ast(E_d/K, 1)}{\log H(E_d/K)} 
\leq o(1) \qquad (\text{as }d\to\infty).
\]
Here, we prove the following improved bounds:
  
 \begin{theo}\label{theo.spval.Bnd} 
For all $\epsilon\in(0,1/4)$, there exist positive constants $C_1, C_2$, depending at most on $p$, $q$ and~$\epsilon$, such that  for any integer $d\geq 2$ coprime to $q$, the special value $L^\ast(E_d/K, 1)$ satisfies: 
\begin{equation}\label{eq.spval.LBnd.ccl}
- C_1 \cdot   \left(\frac{\log \log d}{\log d}\right)^{1/4-\epsilon}
\leq \frac{\log L^\ast(E_d/K, 1)}{\log H(E_d/K)} 
\leq C_2  \cdot  \frac{\log \log d}{\log d}.
\end{equation}
 \end{theo}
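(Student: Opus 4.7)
The plan is to reduce the theorem to an application of the general special-value bounds for ``Jacobi-sum type'' $L$-functions proved in \cite{Griffon_FermatS}, after unpacking the explicit formula of Theorem \ref{theo.Lfunc} into the right normalisation.

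As a first step, I would rewrite $L^\ast(E_d/K,1)$ in terms of the Jacobi sums. For each $\Mm \in \O_q(Z_d)$ the ``inverse root'' $\tt_m(-1)\Ja(\Mm)$ has absolute value $q^{|\Mm|}$ (Definition \ref{defi.jacobi}), so the unit complex number
\[\omega_\Mm := \tt_m(-1)\,\Ja(\Mm)\,q^{-|\Mm|}\]
is well defined and depends only on the orbit $\Mm$. The analytic rank $\rho(E_d/K)$ equals the number of orbits with $\omega_\Mm=1$; removing those trivial factors from Theorem \ref{theo.Lfunc} gives
\[L^\ast(E_d/K,1) = \prod_{\substack{\Mm\in\O_q(Z_d) \\ \omega_\Mm\neq 1}}\bigl(1-\omega_\Mm\bigr), \qquad \log|L^\ast(E_d/K,1)| = \sum_{\Mm:\omega_\Mm\neq 1}\log|1-\omega_\Mm|.\]
Combined with the estimate $\log H(E_d/K) = \lfloor(d+2)/3\rfloor\log q \asymp d\log q$ from \eqref{eq.inv2}, the theorem reduces to the bounds
\[-C_1\, d\log q\,\bigl(\tfrac{\log\log d}{\log d}\bigr)^{1/4-\epsilon} \;\leq\; \sum_\Mm \log|1-\omega_\Mm| \;\leq\; C_2\, d\log q\;\tfrac{\log\log d}{\log d}.\]

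Next, I would match $L(E_d/K,T)$ against the template of the main results of \cite{Griffon_FermatS}: inverse roots that are (normalised) Jacobi-sum values $\ja_{q^{|\Mm|}}(\tt_m,\tt_m,\tt_m)$ attached to multiplicative characters of order dividing~$d$, twisted by a sign $\tt_m(-1)$, and grouped along Frobenius orbits $\Mm\in\O_q(Z_d)$. All the quantitative inputs required by those theorems are already available in the preceding sections, namely: the degree estimate $\deg L(E_d/K,T)\leq d$ coming from \eqref{eq.inv1}, the orbit-counting statements of \S\ref{sec.q.act}, and the explicit description of characters in \S\ref{sec.char}. Verifying these correspondences is essentially bookkeeping, but must be done carefully so that the parameters in the conclusion of \cite{Griffon_FermatS} produce exactly the exponents in \eqref{eq.spval.LBnd.ccl}.

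The upper bound is the easier of the two: the trivial estimate $\log|1-\omega_\Mm|\leq\log 2$ only yields an $O(d)$ bound, so the improvement to $O(d\log\log d/\log d)$ must come from equidistribution of the angles $\arg\omega_\Mm$, which is the main content of the upper-bound statement in \cite{Griffon_FermatS}. The lower bound is the delicate point: one must prevent any factor $|1-\omega_\Mm|$ from being extremely small, and then sum the logarithms without losing too much. Since $L^\ast(E_d/K,1)\in\Q^\times$ has denominator a bounded power of $q$, a Liouville-type argument bounds from below the product of the ``dangerous'' factors, and combining this with a quantitative equidistribution input for the $\omega_\Mm$'s gives rise to the exponent $1/4-\epsilon$.

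I expect the main obstacle to be this last lower bound, and specifically the separation of orbits $\Mm\in\O_q(Z_d)$ of small length $|\Mm|$ (where the contribution must be controlled individually, since short orbits can produce $\omega_\Mm$ very close to $1$) from generic long orbits (where equidistribution takes over). The interplay between these two regimes is precisely what forces the exponent $1/4-\epsilon$ rather than the $1$ one might hope for, and pinning it down requires tracking how the orbit-length distribution of $\O_q(Z_d)$ interacts with the quantitative Liouville estimate applied to $L^\ast(E_d/K,1)$.
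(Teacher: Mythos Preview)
Your overall strategy---recast $L(E_d/K,T)$ via Theorem \ref{theo.Lfunc} into the Jacobi-sum framework of \cite{Griffon_FermatS}, then invoke the general special-value bounds and renormalise using $\log H(E_d/K)\asymp d\log q$---is exactly the paper's route. But you have misidentified where the actual work lies.

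The theorems of \cite{Griffon_FermatS} do not apply to an arbitrary product of Jacobi-sum factors: they require the parameter set (here a subset $\Lambda_d\subset G_d=\{(a_0,\dots,a_3)\in(\Z/d\Z)^4:\sum a_i=0\}$) to be $(\Z/d\Z)^\times$-stable and to satisfy a quantitative hypothesis~\eqref{eq.hyp} bounding the number of tuples with $\max_i\gcd(d,a_i)$ large. The paper makes this concrete by setting $\Lambda_d=\{(m,m,m,-3m):m\in\Z/d\Z\smallsetminus\{0\}\}$, checking via \eqref{eq.jacobi.link} that $L(E_d/K,T)=P(\Lambda_d,T)$, and then proving a short but genuine lemma: for any $u\in(0,1)$, the number of such tuples with $\max_i\gcd(d,a_i)>d^u$ is $\ll_u d^{1-u/2}$, using only $\max_i\gcd(d,a_i)\leq 3\gcd(d,m)$ and the divisor bound $\tau(d)\ll_v d^v$. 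This verification of~\eqref{eq.hyp} is the only new content in \S\ref{sec.bnd.spval}; you dismissed it as ``bookkeeping'' without noticing it was there.

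Your last two paragraphs speculate about the internal mechanism of \cite{Griffon_FermatS}, and here the picture is somewhat off. The paper summarises that source as follows: the upper bound is the straightforward half, while the lower bound combines Stickelberger's theorem on $p$-adic valuations of Jacobi sums with an average equidistribution result for subgroups of $(\Z/d\Z)^\times$. So the equidistribution input feeds the \emph{lower} bound, not the upper; the control on small factors comes from $p$-adic information rather than an archimedean Liouville estimate; and the relevant combinatorial obstruction is the gcd condition in~\eqref{eq.hyp}, not the distribution of orbit lengths $|\Mm|$ that you emphasise. None of this needs to be reproved here---it is cited as a black box---but your sketch would leave a reader looking in the wrong place.
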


In the next section, we will use the BSD conjecture to reveal the arithmetic significance of such an estimate.
The rest of this section is dedicated to the proof of Theorem \ref{theo.spval.Bnd}. 
Using Theorem \ref{theo.Lfunc}, we notice that $L(E_d/K, T)$ is a polynomial of the type studied in \cite{Griffon_FermatS}.
The desired bounds on $L^\ast(E_d/K, 1)$ are then a direct consequence of the results of \emph{loc. cit.}, which we start by recalling.

\subsection{Framework for bounding some special values}
\label{sec.framework}

For the convenience of the reader, we briefly recall the setting introduced in \cite[\S3]{Griffon_FermatS} to prove bounds on special values of polynomials of the type appearing in \eqref{eq.Lfunc}. 
For any integer $d\geq 2$ coprime to $q$, consider 
\[G_d := \left\{\ba=(a_0, a_1, a_2, a_3)\in(\Z/d\Z)^4 : a_0 + a_1+a_2+a_3=0\right\}.\] 
The group $(\Z/d\Z)^\times$ acts on $G_d$ by coordinate-wise multiplication. In particular, $q$ acts by multiplication on $G_d$ and, for any nonempty subset $\Lambda\subset G_d$ which is stable under the action of $q$, we denote by $\O_q(\Lambda)$ the set of orbits.
For $\ba\in G_d$, we denote its orbit by $\bA =\{\ba, q \ba, q^2 \ba, \dots\}$.
We say that a nonempty subset $\Lambda\subset G_d$ satisfies hypothesis \eqref{eq.hyp} if for all $\epsilon\in(0,1/4)$, there exists $u\in(0,1)$ such that
\begin{equation}\label{eq.hyp}\tag{H}
 \left| \left\{ (a_0, \dots, a_3)\in\Lambda : \ d>\max_{0\leq i\leq 3}\{\gcd(d,a_i)\} > d^u\right\}\right| \leq c' \cdot {|\Lambda|}\cdot \left(\frac{\log\log d}{\log d}\right)^{1/4-\epsilon},
\end{equation}
for some constant $c'$.

For $\ba=(a_0, \dots, a_3)\in G_d$ with $\ba\neq\bm{0}=(0,0,0,0)$, whose orbit $\bA$ has length $|\bA|$, we define  four characters on $\F_{q^{|\bA|}}^\times$:
\[\forall i\in\{0, 1,2,3\}, \quad 
 \cchi_{i}: \F_{q^{|\bA|}}^\times \to \Qbar^\times, \ x \mapsto \tt(x)^{(q^{|\bA|}-1)\cdot a_i/d}.
\]
One then defines a Jacobi sum:
\[ \Ja'(a_0, a_1, a_2, a_3) = \frac{1}{{q^{|\bA|}}-1}\sum_{\substack{x_0, \dots, x_3 \in\F_{q^{|\bA|}}^\times\\ x_0 +\dots + x_3=0}}\cchi_0(x_0)\cchi_1(x_1)\cchi_2(x_2)\cchi_3(x_3) \in\Q(\zeta_d).\]
Since $\ba\in G_d$, the product $\cchi_0\cchi_1\cchi_2\cchi_3$ is the trivial character on $\F_{q^{|\bA|}}^\times$, and a classical calculation (\cf{} Lemma 2.5.13 in \cite{Cohen}) relates $\Ja'(a_0, a_1, a_2, a_3)$ to the Jacobi sums in Definition \ref{defi.jacobi}: 
\begin{equation}\label{eq.jacobi.link}
\Ja'(a_0, a_1, a_2, a_3) 
=  (\cchi_0\cchi_1\cchi_2)(-1)\cdot \ja_{q^{|\bA|}}(\cchi_0, \cchi_1, \cchi_2).
\end{equation}
For any $\ba=(a_0, a_1, a_2, a_3)\in G_d\smallsetminus\{\bm{0}\}$, it is well-known that $\Ja'(\ba)=0$ as soon as some (but not all) of the $a_i$'s are $0\bmod{d}$, and that $|\Ja'(\ba)|=q^{|\bA|}$ if all $a_i$'s are nonzero.

To any nonempty  subset $\Lambda\subset G_d$ which is stable under the action of $(\Z/d\Z)^\times$, we can associate a polynomial
\[P(\Lambda, T) := \prod_{\bA\in\O_q(\Lambda)} \left(1-\Ja'(\ba)  \cdot T^{|\bA|}\right),\]
where, for any orbit $\bA$, $\ba\in G_d$ denotes a choice of representative of $\bA$. 
Since the action of $\Gal(\Q(\zeta_d)/\Q)$ on $\{\Ja'(\ba)\}_{\ba\in G_d}$ corresponds to the action of $(\Z/d\Z)^\times$ on $G_d$ in the isomorphism $\Gal(\Q(\zeta_d)/\Q)\simeq (\Z/d\Z)^\times$, the assumption that $\Lambda$ is $(\Z/d\Z)^\times$-stable ensures that  $P(\Lambda, T)\in\Z[T]$. 
For $\Lambda$ as above, we finally introduce a 
\emph{special value}:
\begin{equation}\label{eq.def.spval.lambda}
P^\ast(\Lambda) := \left.\frac{P(\Lambda, T)}{(1-qT)^\rho}\right|_{T=q^{-1}} \in \Z[q^{-1}]\smallsetminus\{0\} 
\quad \text{ where } \rho=\ord_{T=q^{-1}} P(\Lambda, T).\end{equation}

The following statement summarizes the main technical results of \cite{Griffon_FermatS}: 
\begin{theo}\label{theo.bnd.spval}
For all $\epsilon\in(0,1/4)$, there exist positive constants $C_3, C_4$,  depending at most on $q$, $p$ and $\epsilon$, such that the following holds.
For any integer $d\geq 2$ coprime to $q$, and any nonempty subset $\Lambda\subset G_d$  which is stable under the action of $(\Z/d\Z)^\times$ and for which hypothesis \eqref{eq.hyp} holds, the special value $P^\ast(\Lambda)$ satisfies
\begin{equation}\label{eq.conclu2}
 - C_3 \cdot\left(\frac{\log\log d}{\log d}\right)^{1/4-\epsilon}\leq \frac{\log |P^\ast(\Lambda)|}{  \log q^{|\Lambda|} } 
 \leq C_4 \cdot\frac{\log\log |\Lambda|}{\log|\Lambda|}.
 \end{equation}
\end{theo}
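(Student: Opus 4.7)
The starting point is that by the Weil bound on Jacobi sums, $|\Ja'(\ba)| \in \{0, q^{|\bA|}\}$, so setting $\alpha_\bA := \Ja'(\ba)/q^{|\bA|}$, each nonzero $\alpha_\bA$ lies on the unit circle in $\C$. After dividing out the trivial zeros of $P(\Lambda, T)$ at $T = q^{-1}$ (precisely those orbits with $\alpha_\bA = 1$), one has
\[ \log|P^\ast(\Lambda)| = \sum_{\bA \in \O_q(\Lambda),\ \alpha_\bA \notin \{0,1\}} \log|1 - \alpha_\bA|, \]
reducing both bounds to estimating a sum of logarithmic distances from $1$ to a collection of unit-circle points of algebraic origin. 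The normalizing degree $|\Lambda| = \sum_\bA |\bA|$ appearing in \eqref{eq.conclu2} is precisely the degree of $P(\Lambda, T)$.

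\textbf{Upper bound.} I would adapt the ``Littlewood-style'' technique used for bounding Dirichlet $L$-values at $s=1$. Using the Fourier expansion $\log|1 - e^{i\theta}| = -\sum_{k\geq 1} \cos(k\theta)/k$, one expresses the above sum in terms of the power sums $W_k := \sum_{\bA}|\bA| \cdot \alpha_{\bA}^k$. These are, up to normalization, traces of Frobenius on the pure motive underlying $P(\Lambda, T)$; concretely they are sums of $k$-th powers of normalized Jacobi sums, which by Weil--Deligne bounds exhibit square-root cancellation once $k$ is not too large. Truncating the Fourier series at $K \asymp \log |\Lambda|$, bounding $|W_k|$ nontrivially for $k \leq K$ and trivially by $|\Lambda|$ for $k > K$, an Erd\H{o}s--Tur\'an-type discrepancy argument produces the desired upper bound $\log|P^\ast(\Lambda)| \ll |\Lambda| \log q \cdot \log\log|\Lambda|/\log|\Lambda|$.

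\textbf{Lower bound.} This is the principal obstacle and the only place where hypothesis \eqref{eq.hyp} enters. I would split the sum according to $M(\ba) := \max_i \gcd(d, a_i)$: orbits with $M(\ba) = d$ have $\Ja'(\ba) = 0$ and contribute nothing; orbits with $d^u < M(\ba) < d$ form the \emph{bad} part, whose size is controlled by \eqref{eq.hyp}; orbits with $M(\ba) \leq d^u$ form the \emph{primitive} part. For $\ba$ in the bad part, a Liouville-type argument (the algebraic integer $q^{|\bA|} - \Ja'(\ba) \in \Z[\zeta_d]$ is nonzero with all conjugates of absolute value $\leq 2 q^{|\bA|}$, hence its norm to $\Z$ is at least $1$) yields $|1 - \alpha_\bA| \geq q^{-C |\bA|}$ and therefore $-\log|1 - \alpha_\bA| \leq C |\bA| \log q$. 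Summing over the bad part and invoking \eqref{eq.hyp} contributes at most $\ll |\Lambda| \log q \cdot (\log\log d / \log d)^{1/4 - \epsilon}$. For the primitive part, sharper effective bounds (using equidistribution of primitive Jacobi sums, controlled by Stickelberger- or Stepanov-type estimates for distances between normalized Jacobi sums and $1$) give an error of smaller order $O(|\Lambda| \log q \cdot \log\log d/\log d)$, which is dominated by the bad-part contribution.

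\textbf{Main obstacle.} The delicate points are (i) producing Liouville-type lower bounds for $|1 - \alpha_\bA|$ with a constant $C$ \emph{independent} of $d$ (so that the total bad contribution does not blow up beyond $|\Lambda| \log q$ times the proportion given by (H)), and (ii) calibrating the threshold $u \in (0,1)$ so that the exponent $1/4 - \epsilon$ inherited from \eqref{eq.hyp} is matched exactly in the final estimate, rather than degraded to a weaker $o(1)$ rate. The balance between the bad-part bound (controlled by (H)) and the primitive-part equidistribution bound is what ultimately forces both the shape of hypothesis \eqref{eq.hyp} and the form of the lower bound in \eqref{eq.conclu2}.
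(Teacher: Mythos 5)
First, a point of comparison: the paper does not prove this theorem in situ --- it is quoted as a concatenation of Theorems 5.1 and 6.2 of \cite{Griffon_FermatS}, whose proof of the lower bound rests on two ingredients the paper names explicitly: Stickelberger's theorem on the $p$-adic valuations of Jacobi sums, and an average equidistribution theorem for subgroups of $(\Z/d\Z)^\times$. Your proposal replaces this $p$-adic mechanism by an archimedean, Liouville-type argument, and that is where it breaks down. The element $\beta=q^{|\bA|}-\Ja'(\ba)$ lives in $\Z[\zeta_d]$, a ring of degree $\phi(d)$ over $\Z$; the norm inequality $|\mathrm{N}_{\Q(\zeta_d)/\Q}(\beta)|\geq 1$ together with the bound $|\sigma(\beta)|\leq 2q^{|\bA|}$ on the conjugates only yields
\[ |1-\alpha_\bA| \;=\; \frac{|\beta|}{q^{|\bA|}} \;\geq\; \bigl(2q^{|\bA|}\bigr)^{-(\phi(d)-1)}\, q^{-|\bA|}, \]
so the constant $C$ in your claimed bound $|1-\alpha_\bA|\geq q^{-C|\bA|}$ is forced to grow like $\phi(d)$. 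You flag this as ``delicate point (i)'', but it is not a calibration issue: summing $-\log|1-\alpha_\bA|\ll \phi(d)\,|\bA|\log q$ over the bad part and invoking \eqref{eq.hyp} gives a contribution of order $\phi(d)\cdot|\Lambda|\log q\cdot(\log\log d/\log d)^{1/4-\epsilon}$, which exceeds the target by a factor of roughly $d$. No purely archimedean norm argument can repair this, since the field degree genuinely tends to infinity. The actual proof sidesteps the problem by exploiting that $P^\ast(\Lambda)\in\Z[q^{-1}]\smallsetminus\{0\}$, so that $\log|P^\ast(\Lambda)|\geq -v\log q$ with $q^v$ the exact denominator; $v$ is then controlled through the $p$-adic valuations of the $\Ja'(\ba)$, made explicit by Stickelberger as sums of fractional parts $\sum_k\partfrac{ka_i/d}$, and the average equidistribution theorem shows these valuations are small on average over the primitive part of $\Lambda$ --- hypothesis \eqref{eq.hyp} serving exactly to discard the non-primitive elements where this averaging fails.

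Your upper-bound sketch is also incomplete as stated: the Erd\H{o}s--Tur\'an route requires square-root cancellation in the power sums $W_k=\sum_{\bA}|\bA|\,\alpha_\bA^{k}$, and for an arbitrary $(\Z/d\Z)^\times$-stable subset $\Lambda\subset G_d$ there is no stated input (Deligne's bounds control individual $|\alpha_\bA|$, not cancellation among them) that delivers this; it is an equidistribution assertion for Jacobi-sum angles that would itself need proof. The argument of \cite{Griffon_FermatS} for the upper bound instead uses only that $P(\Lambda,T)\in\Z[T]$ with all inverse roots of modulus $q$, via a counting argument for the inverse roots lying in small discs around $q$. I would encourage you to rework the lower bound $p$-adically; the archimedean framing cannot be salvaged.
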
 
This theorem is a concatenation of  Theorems 5.1 and 6.2 in 
\emph{loc.cit.}:  the proof of the upper bound is relatively straighforward but  that of the lower bound is more delicate. 
It essentially involves two ingredients: the Stickelberger theorem about $p$-adic valuations of Jacobi sums  and an average equidistribution theorem for subgroups of $(\Z/d\Z)^\times$ (see \cite[\S4, \S6]{Griffon_FermatS} for more details).

\subsection{Proof of Theorem \ref{theo.spval.Bnd}} 
\label{sec.bnd.spval}

In order to apply Theorem \ref{theo.bnd.spval} to the special value $L^\ast(E_d/K, 1)$, we start by relating $L(E_d/K, T)$ to a certain
$P(\Lambda, T)$ as in the last subsection. 
Namely, for any integer $d\geq 2$ coprime to $q$, we consider the subgroup $H_d\subset G_d$ generated by $\bm{u}=(1, 1, 1, -3)$ and we let
\begin{equation}\label{eq.def.Lambda}
\Lambda_d := H_d\smallsetminus\{\bm{0}\} 
=  \left\{(m,m,m,-3m), \ m\in \Z/d\Z\right\}\smallsetminus\{\bm{0}\}.
\end{equation}
Being  a subgroup of $G_d$, $H_d$ is nonempty and stable under multiplication by $(\Z/d\Z)^\times$, and so is $\Lambda_d$. 
We clearly have $|\Lambda_d| = |\Z/d\Z| - 1 =d-1$.
Let $\ba=  (a_0, \dots, a_3)$  be an element of $\Lambda_d$, so that $\ba = m \cdot \bm{u}$ for some $m\in\Z/d\Z\smallsetminus\{0\}$. Notice first that $|\bA| = |\Mm|$ because the coordinates of $\bm{u}$ are pairwise coprime. Also, all the $a_i$ are nonzero 
if and only if $m\in Z_d$.
Furthermore, it follows from \eqref{eq.jacobi.link} that
\[\Ja'(\ba) 
= \Ja'(m,m,m,-3m)
= \tt_m(-1)^3\cdot \ja_{q^{|\Mm|}}(\tt_m, \tt_m, \tt_m)
= \tt_m(-1)\cdot \Ja(m).\]
Consequently, 
in the notations of \S\ref{sec.framework}, Theorem \ref{theo.Lfunc} translates  as: 
 
\begin{coro}\label{coro.Lfunc.link}
 Let $d\geq 2$ be an integer coprime to $q$, define $\Lambda_d$ as in \eqref{eq.def.Lambda}. Then the $L$-function of $E_d$ is given by
\[L(E_d/K, T) = P(\Lambda_d, T)\in\Z[T].\]
\end{coro}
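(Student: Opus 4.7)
My plan is to assemble the corollary from the preparatory computations already recorded in the paragraph preceding the statement, together with a careful bookkeeping of orbits. Essentially everything has been done; one only needs to match the two product formulas factor-by-factor.

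First, I would observe that the map $\varphi : m \mapsto m \cdot \bm{u} = (m,m,m,-3m)$ is a bijection from $\Z/d\Z\smallsetminus\{0\}$ onto $\Lambda_d$, and it is equivariant under multiplication by~$q$ (since $q\cdot\varphi(m) = \varphi(qm)$). This equivariance induces a bijection between $\O_q(\Z/d\Z\smallsetminus\{0\})$ and $\O_q(\Lambda_d)$; moreover, under this bijection, the orbit $\Mm$ of $m$ and the orbit $\bA$ of $\ba=\varphi(m)$ satisfy $|\Mm|=|\bA|$, as already noted right after the definition of $\Lambda_d$ (the coordinates of $\bm{u}$ being pairwise coprime to~$d$). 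Hence one can rewrite
\[P(\Lambda_d, T) = \prod_{\Mm\in \O_q(\Z/d\Z\smallsetminus\{0\})} \left(1-\Ja'(m\bm{u})\cdot T^{|\Mm|}\right).\]

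The next step is to sort the orbits of $\Z/d\Z\smallsetminus\{0\}$ according to whether their representative $m$ lies in $Z_d$ or not. If $3\nmid d$, then $Z_d = \Z/d\Z\smallsetminus\{0\}$ and there is nothing to discard. If $3\mid d$, the leftover elements are $m\in\{d/3,\,2d/3\}$; for such an $m$ one has $-3m\equiv 0\bmod d$, so $\varphi(m)$ has exactly one zero coordinate. By the property recalled in \S\ref{sec.framework} (namely, $\Ja'(\ba)=0$ when some but not all of the $a_i$ vanish), the corresponding factors in $P(\Lambda_d,T)$ are trivially equal to~$1$, and may be deleted from the product. One is therefore left with
\[P(\Lambda_d, T) = \prod_{\Mm\in \O_q(Z_d)} \left(1-\Ja'(m\bm{u})\cdot T^{|\Mm|}\right).\]

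For the final identification with $L(E_d/K,T)$, I would invoke the formula $\Ja'(m,m,m,-3m)=\tt_m(-1)\cdot\Ja(\Mm)$, established in the paragraph preceding the corollary via relation~\eqref{eq.jacobi.link} (using $\tt_m(-1)^3 = \tt_m(-1)$). Substituting this into the preceding display yields
\[P(\Lambda_d, T) = \prod_{\Mm\in\O_q(Z_d)} \left(1 - \tt_m(-1)\Ja(\Mm)\cdot T^{|\Mm|}\right),\]
which, by Theorem \ref{theo.Lfunc}, is precisely $L(E_d/K,T)$. There is no genuine obstacle here: the whole proof is a matter of making sure that the two indexing sets coincide after removing the degenerate orbits (those with $3\mid d$ and $m\in\{d/3,2d/3\}$), which is the only delicate accounting point.
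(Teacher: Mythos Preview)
Your proposal is correct and follows exactly the approach of the paper: the paragraph immediately preceding the corollary already records the bijection $m\mapsto m\bm{u}$, the equality $|\bA|=|\Mm|$, the vanishing of $\Ja'(m\bm{u})$ for $m\notin Z_d$, and the identity $\Ja'(m\bm{u})=\tt_m(-1)\Ja(m)$ for $m\in Z_d$, so that the corollary is a direct translation of Theorem~\ref{theo.Lfunc}. One tiny slip: you write ``the coordinates of $\bm{u}$ being pairwise coprime to~$d$,'' but the paper says (and means) that the coordinates of $\bm{u}$ are pairwise coprime---in fact the key point is simply that one coordinate equals~$1$, which forces $|\bA|\geq|\Mm|$ while the reverse inequality is automatic.
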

In particular, since the definitions \eqref{eq.def.spval} and \eqref{eq.def.spval.lambda} of special values agree, we see that 
\[L^\ast(E_d/K, 1)=P^\ast(\Lambda_d).\]

Let us now check that the subset $\Lambda_d\subset G_d$ defined in \eqref{eq.def.Lambda} satisfies (a strong form of) hypothesis \eqref{eq.hyp}:
 \begin{lemm} 
 For all $u\in(0,1)$, one has
\[ \left| \left\{\ba\in\Lambda_d : \ d>\max_i\{\gcd(d,a_i)\} > d^u\right\}\right| \ll_u d^{-u/2} \cdot |\Lambda_d|.
\]
\end{lemm}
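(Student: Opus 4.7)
The plan is to reduce the count to one over integers $m\in \Z/d\Z$ whose gcd with $d$ is large, then estimate using the divisor bound.

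First, I would identify $\Lambda_d$ with $\Z/d\Z\smallsetminus\{0\}$ via the bijection $m \mapsto (m,m,m,-3m)$. For such an $\ba$, writing $g:=\gcd(d,m)$, the first three gcds $\gcd(d,a_i)$ all equal $g$, while $\gcd(d,a_3)=\gcd(d,3m)$ lies between $g$ and $3g$ (it is a multiple of $g$ dividing $3g$). Hence
\[ g \;\leq\; \max_{0\leq i\leq 3}\gcd(d,a_i) \;\leq\; 3g.\]
In particular, the condition $\max_i\gcd(d,a_i)>d^u$ forces $g>d^u/3$, so the set to be bounded is contained in $\{m\in\Z/d\Z : \gcd(d,m)>d^u/3\}$ (the upper constraint $\max_i\gcd(d,a_i)<d$ only removes at most two extra values of $m$, which is absorbed in the estimate).

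Next, I would split this set by the exact value of $g=\gcd(d,m)$. For each divisor $g$ of $d$, there are exactly $\phi(d/g)$ elements $m\in\Z/d\Z$ with $\gcd(d,m)=g$. Setting $h=d/g$, the count becomes
\[\#\bigl\{m\in\Z/d\Z : \gcd(d,m)>d^u/3\bigr\} \;=\; \sum_{\substack{h\mid d \\ h<3d^{1-u}}}\phi(h) \;\leq\; \sum_{\substack{h\mid d \\ h<3d^{1-u}}} h \;\leq\; 3d^{1-u}\,\tau(d),\]
using $\phi(h)\leq h$ and that each of the at most $\tau(d)$ divisors $h$ in the sum satisfies $h<3d^{1-u}$.

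Finally, I would invoke the classical divisor bound $\tau(d)\ll_\epsilon d^{\epsilon}$ with the specific choice $\epsilon=u/2$, giving $\tau(d)\ll_u d^{u/2}$ and therefore a total count $\ll_u d^{1-u/2}$. Since $|\Lambda_d|=d-1$, this is exactly the announced bound $d^{-u/2}\cdot|\Lambda_d|$. There is no real mathematical obstacle here: the argument is essentially bookkeeping, and the only subtlety is the harmless factor $3$ coming from $\gcd(d,3m)$, which merely shifts constants and is irrelevant once the divisor bound is applied.
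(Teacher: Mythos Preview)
Your proof is correct and follows essentially the same route as the paper: reduce to counting $m\in\Z/d\Z$ with $\gcd(d,m)>d^u/3$ via the inequality $\max_i\gcd(d,a_i)\leq 3\gcd(d,m)$, sum $\phi$ over the relevant divisors, and finish with the divisor bound $\tau(d)\ll_u d^{u/2}$. The only cosmetic difference is that the paper sums over the divisor $e=\gcd(d,m)$ while you sum over its complement $h=d/e$, which of course gives the same estimate.
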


\begin{proof} By construction of $H_d$, any $\ba=(a_0, \dots, a_3)\in \Lambda_d$ is of the form $\ba = (m, m, m, -3m)$ for some ${m\in\Z/d\Z\smallsetminus\{0\}}$. 
Thus, one has 
 $\max_i\{\gcd(d,a_i)\}\leq 3\gcd(d, m)$.
In particular, we obtain that 
\[ \left| \left\{\ba\in\Lambda_d : \ d>\max_i\{\gcd(d,a_i)\} > d^u\right\}\right| 
\leq \left| \left\{m\in\Z/d\Z\smallsetminus\{0\} : \  \gcd(d,m)> {d^u}/{3}\right\}\right|. \]
For any divisor $e$ of $d$, the number of $m\in\Z/d\Z$ such that $\gcd(d,m)=e$ is $\phi(d/e)\leq d/e$. Hence,  
\begin{align*}
\left| \left\{ m\in\Z/d\Z: \  \gcd(d,m)> d^u/3\right\}\right|
&= \sum_{ \substack{e\mid d \\ d^u/3<e }} \left| \left\{ m\in\Z/d\Z :  \  \gcd(d,m)=e\right\}\right| 
\leq  \sum_{ \substack{e\mid d \\ d^u/3<e }}  \frac{d}{e} \leq \frac{3 \tau(d) \cdot d}{d^u},
\end{align*}
where $\tau(d)$ is the number of divisors of $d$.
By a classical theorem, for all $v>0$, there is an explicit constant $c_v$ such that $\tau(d) \leq c_v\cdot d^v$  (see \cite[Thm. 315]{HardyWright}). 
In particular, for $v=u/2 >0$, we have:
\[
 \left| \left\{\ba\in\Lambda_d : \ d>\max_i\{\gcd(d,a_i)\} > d^u\right\}\right| \leq 3 \tau(d)d^{-u} \cdot |\Lambda_d| 
\ll_ u d^{-u/2}\cdot |\Lambda_d|.
\]
This proves the Lemma, and shows that $\Lambda_d$ satisfies hypothesis \eqref{eq.hyp}.
\ProofEnd

\paragraph{}
Together with Corollary \ref{coro.Lfunc.link},   the previous Lemma implies that Theorem \ref{theo.bnd.spval} applies to ${P^\ast(\Lambda_d)=L^\ast(E_d/K, 1)}$.
Remembering that $|\Lambda_d|=d-1$,  we obtain that 
\[  - C_3 \cdot\left(\frac{\log\log d}{\log d}\right)^{1/4-\epsilon}\leq \frac{\log L^\ast(E_d/K, 1)}{  \log q^{d-1} } 
 \leq C_4 \cdot\frac{\log\log d}{\log d}, \quad (\text{as } d\to\infty).\]
By \eqref{eq.inv2}, we have 
\[ \forall d\geq 2, \qquad 
  \frac{9}{4} \leq \frac{3(d+1)}{d+2}
\leq \frac{\log q^{d-1}}{\log H(E_d/K)} = \frac{d-1}{\partint{(d+2)/3}}
\leq \frac{3(d+1)}{d-1}\leq 9.
\]
Combining the last two displayed sets of inequalities concludes the proof of Theorem \ref{theo.spval.Bnd}. \ProofEnd \end{proof}

\section{Analogue of the Brauer-Siegel theorem}
\label{sec.BS}

In this section, we reinterpret the bounds in  Theorem \ref{theo.bnd.spval} in terms of arithmetic invariants of $E_d/K$, which we first introduce.
By the analogue of the Mordell--Weil theorem for elliptic curves over $K$, the group $E_d(K)$ is finitely generated (\cf{} \cite[Lect. 1, Thm. 5.1]{UlmerParkCity}). 
Furthermore, the group $E_d(K)$ is endowed with the canonical  Néron--Tate height $\hhat{h}_{NT}:E_d(K) \to \Q$. 
The quadratic map $\hhat{h}_{NT}$ induces a $\Z$-bilinear pairing $\langle -, - \rangle_{NT}: E_d(K)\times E_d(K)\to\Q$, which is nondegenerate modulo $E_d(K)\tors$ (\cf{} \cite[Chap. III, Thm. 4.3]{ATAEC}).
The \emph{Néron-Tate regulator} of $E_d/K$ is then defined as: 
\[\Reg(E_d/K) := \left|\det\left( \langle P_i, P_j \rangle_{NT}\right)_{1\leq i,j \leq r}\right| \in\Q^\ast,\]
for any choice of a $\Z$-basis $P_1, \dots, P_r \in E_d(K)$ of $E_d(K)/E_d(K)\tors$. 
Note that we normalize $\langle -, - \rangle_{NT}$ to have values in $\Q$: we may do so since, in our context, this height pairing has an interpretation as an intersection pairing on the minimal regular model of $E_d$ (see \cite[Chap. III, \S9]{ATAEC}).    
Let us also recall that the \emph{Tate--Shafarevich group} of $E_d/K$
is defined by
\[\sha(E_d/K) :=
\ker\left( \H^1(K, E_d) \longrightarrow \prod_{v} \H^1(K_v, (E_d)_v)\right),\]
see \cite[Lect. 1, \S11]{UlmerParkCity} for more details. 
In Theorem \ref{theo.BSD} below, we will see that $\sha(E_d/K)$ is finite.

\subsection{The BSD conjecture}\label{sec.BSD}

Inspired by the conjecture of Birch and Swinnerton-Dyer for elliptic curves over $\Q$, Tate gave a conjectural arithmetic interpretation of $\rho(E_d/K)$ and $L^\ast(E_d/K, 1)$ (see \cite{Tate_BSD}). The conjecture is still open in general, but has been proved in the case of $E_d$ by Ulmer. We state his result as follows:

\begin{theo}[Ulmer]\label{theo.BSD}
For all integers $d\geq 1$, coprime with $q$, let $E_d$ be the Hessian elliptic curve \eqref{eq.Wmodel} as above. Then the full BSD conjecture is true for $E_d/K$: that is to say,
\begin{enumerate}[$\bullet$]
\item the Tate-Shafarevich group $\sha(E_d/K)$ is finite,
\item the rank of $E_d(K)$ is equal to $\ord_{T=q^{-1}} L(E_d/K, T)$,
\item moreover, one has
\begin{equation}\label{eq.BSD}
L^\ast(E_d/K, 1) = \frac{|\sha(E_d/K)|\cdot \Reg(E_d/K)}{H(E_d/K)}\cdot \frac{\tam(E_d/K)\cdot q}{|E_d(K)\tors|^2}.
\end{equation}
\end{enumerate}
\end{theo}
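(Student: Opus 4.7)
The plan is to follow Ulmer's approach in \cite[\S6]{Ulmer_largeLrank}, which reduces the three assertions of the theorem to the Tate conjecture for a smooth projective surface over $\F_q$ that is dominated by a Fermat surface.

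The first step is to exhibit $E_d$ as the pullback of a fixed ``generic'' Hessian curve along a Kummer base change. Namely, I would set $s=t^d$ and consider the curve $\tilde{E}:y^2+xy-sy=x^3$ defined over $\F_q(s)$; then $E_d$ is obtained from $\tilde E$ by the base change $s\mapsto t^d$. The minimal regular model $\Ecal_d$ of $E_d$ thus sits in a degree-$d$ Kummer tower above the elliptic surface attached to $\tilde{E}$.

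The second step is to invoke the classical dictionary, due to Tate and Artin--Tate (extended to characteristic $p$ by Milne and Kato--Trihan), that relates BSD over a function field to the Tate conjecture for the associated elliptic surface. In this dictionary, finiteness of $\sha(E_d/K)$ is equivalent to finiteness of the $\ell$-primary part of $\Br(\Ecal_d)$ for some (equivalently, for all) prime $\ell$, and is equivalent to the Tate conjecture on divisors of $\Ecal_d$. Moreover, as soon as this finiteness holds, the rank equality $\rk E_d(K)=\ord_{T=q^{-1}}L(E_d/K,T)$ and the leading-term identity \eqref{eq.BSD} follow formally from the Artin--Tate formula.

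The decisive geometric input is thus the Tate conjecture for $\Ecal_d$. I would establish it by constructing an explicit dominant rational map $\ferm_d\dashrightarrow\Ecal_d$, possibly factoring through a quotient of $\ferm_d$ by a finite subgroup of $(\mu_d)^2$ acting diagonally, where $\ferm_d\subset\P^3_{\F_q}$ is the Fermat surface $X_0^d+X_1^d+X_2^d+X_3^d=0$. The recipe for such a parametrization of elliptic curves obtained by Kummer pullback is worked out in \cite[\S\S4--6]{Ulmer_largeLrank} and \cite[Lect. 3, \S10]{UlmerParkCity}, and in our case the key data are the degree $d$ and the fact that $\tilde E/\F_q(s)$ is given by a single monomial in $s$. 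Because the Tate conjecture is known unconditionally for $\ferm_d$ over $\F_q$ (Tate, Shioda) and descends along dominant rational maps of smooth projective surfaces, it holds for $\Ecal_d$, and therefore \eqref{eq.BSD} follows together with finiteness of $\sha(E_d/K)$ and the rank formula. The main obstacle I anticipate is the explicit Kummer-to-Fermat construction: one must write the parametrization down in a form concrete enough to verify both its dominance and its compatibility with the elliptic fibration $\pi:\Ecal_d\to\P^1$; once this geometric step is in place, everything else is a formal consequence of the equivalences recalled above.
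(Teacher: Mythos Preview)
Your proposal is correct and follows exactly the route the paper takes: the paper does not give an independent argument but simply refers to \cite[\S6]{Ulmer_largeLrank} and \cite[Lect.~3, \S10]{UlmerParkCity}, which is precisely the Kummer-tower/Fermat-surface reduction to the Tate conjecture that you outline. In fact you supply more detail than the paper itself, which treats Theorem~\ref{theo.BSD} as a black box due to Ulmer.
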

We refer the reader to \cite[\S6]{Ulmer_largeLrank} for the proof, or to \cite[Lect. 3, \S10]{UlmerParkCity} for a detailed sketch.

\begin{rema} Given Corollary \ref{coro.Lfunc.link}, Lemma 3.5 in \cite{Griffon_FermatS} yields fairly explicit expressions for $\rho(E_d/K)$ and $L^\ast(E_d/K, 1)$ as follows. 
For any integer $d\geq 2$, in the notations of \S\ref{sec.prel}, consider the two subsets of $Z_d$ given by
\[ V_d := \left\{ m \in Z_d : \ \tt_m(-1)\Ja(m)=q^{|\Mm|}\right\} \quad \text{ and } \quad
S_d :=Z_d\smallsetminus V_d. \]
It is easy to check that the sets $V_d$ and $S_d$ are stable under multiplication by $q$. Then,  the analytic rank  is given by $\rho(E_d/K)=|\O_q(V_d)|$, and the special value $L^\ast(E_d/K, 1)$ has the following expression:
\begin{equation}\label{eq.expr.spval}
L^\ast(E_d/K, 1) = \prod_{\Mm \in \O_q(V_d)} |\Mm| \cdot \prod_{\Mm\in\O_q(S_d)} \left(1- \tt_m(-1)\Ja(m) \cdot q^{-|\Mm|}\right).
\end{equation}

\end{rema}

\begin{rema}  
By Theorem \ref{theo.BSD} above, the analytic rank $\rho(E_d/K)$ is equal to $\mathrm{rank}(E_d(K))$. 
The expression for $\rho(E_d/K)$ obtained in the previous remark allows us to retrieve a result of Ulmer on the ranks of $E_d(K)$
stating that  as $d$ ranges though integers coprime to $q$, the ranks of $E_d(K)$ are unbounded (see \cite[\S2-\S4]{Ulmer_largeLrank},  \cite[Lect. 4, Thm. 3.1.1]{UlmerParkCity}). 
More precisely, one can show that there are infinitely many integers $d'\geq 2$ coprime to $q$, such that 
$\mathrm{rank}(E_{d'}(K))\gg_q { {d'} }/{\log d'},$
where the implied constant is effective and depends only on $q$. 
We refer to \cite[Prop. 7.3.5]{Griffon_PHD} for more details. 
\end{rema}

We conclude this subsection by recording the following estimate (see also \S2 in \cite{HP15}): 

\begin{coro}\label{coro.link.spval.BS}
When $d\to\infty$ runs over the integers coprime to $q$, one has:
\[ \frac{\log\big(|\sha(E_d/K)|\cdot\Reg(E_d/K)\big)}{\log H(E_d/K)} = 1 + \frac{\log L^\ast(E_d/K, 1) }{\log H(E_d/K)} + O\left(\frac{\log d}{d}\right),\]
where the implicit constant is effective and depends at most on $q$.
\end{coro}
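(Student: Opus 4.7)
The plan is to deduce this directly from the BSD formula of Theorem \ref{theo.BSD} combined with the elementary estimate of Corollary \ref{coro.tamtors.Bnd}.

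First, I would take the logarithm of both sides of the identity
\[L^\ast(E_d/K, 1) = \frac{|\sha(E_d/K)|\cdot \Reg(E_d/K)}{H(E_d/K)}\cdot \frac{\tam(E_d/K)\cdot q}{|E_d(K)\tors|^2},\]
which is licit since all quantities involved are positive real numbers (BSD guarantees that $\sha(E_d/K)$ is finite, so the right-hand side is a well-defined positive rational). Rearranging yields
\[\log\bigl(|\sha(E_d/K)|\cdot\Reg(E_d/K)\bigr) = \log L^\ast(E_d/K, 1) + \log H(E_d/K) - \log\!\left(\frac{\tam(E_d/K)\cdot q}{|E_d(K)\tors|^2}\right).\]

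Next, I would divide both sides by $\log H(E_d/K)$, which is positive and tends to infinity with $d$ by \eqref{eq.inv2}. The first three terms produce the main contribution $1 + \log L^\ast(E_d/K, 1)/\log H(E_d/K)$, while the last term is precisely the quantity controlled by Corollary \ref{coro.tamtors.Bnd}, which tells us that
\[\left|\frac{\log \bigl(\tam(E_d/K)\cdot q\cdot|E_d(K)\tors|^{-2}\bigr)}{\log H(E_d/K)}\right| \ll_q \frac{\log d}{d}\]
as $d\to\infty$ through integers coprime to $q$. This error term is therefore $O(\log d / d)$, with an effective implied constant depending only on $q$, which gives exactly the claimed asymptotic.

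There is essentially no obstacle here: the proof is a one-line manipulation of the BSD formula, and the only non-trivial input, namely the control of the Tamagawa number and torsion contribution, has already been established in Corollary \ref{coro.tamtors.Bnd} (using the explicit computations of \S\ref{sec.hessian}). The finiteness of $\sha(E_d/K)$ needed to make sense of the left-hand side is part of Theorem \ref{theo.BSD}, so nothing further needs to be checked.
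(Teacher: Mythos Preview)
Your proposal is correct and follows essentially the same approach as the paper: take logarithms in the BSD identity \eqref{eq.BSD}, divide by $\log H(E_d/K)$, and absorb the Tamagawa--torsion term into $O(\log d/d)$ via Corollary \ref{coro.tamtors.Bnd}. The paper's proof is exactly this two-line manipulation, with the same remark that finiteness of $\sha(E_d/K)$ is supplied by Theorem \ref{theo.BSD}.
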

\begin{proof} We first note that Theorem \ref{theo.BSD} ensures that $\sha(E_d/K)$ is a finite group, so that the quantity on the left-hand side makes sense. For any integer $d\geq 2$ coprime to $q$, 
we take the logarithm of \eqref{eq.BSD} and divide throughout by $\log H(E_d/K)$. Reordering terms, we obtain that 
\[\frac{\log\big(|\sha(E_d/K)|\cdot\Reg(E_d/K)\big)}{\log H(E_d/K)} = 1 + \frac{\log L^\ast(E_d/K, 1) }{\log H(E_d/K)} + \frac{\log \left( \tam(E_d/K)\cdot q \cdot |E_d(K)\tors|^{-2}\right)}{\log H(E_d/K)}.\]
Corollary \ref{coro.tamtors.Bnd} then allows us to control the size of the right-most term. This yields the desired result.
\ProofEnd
\end{proof}

\subsection{Analogue of the Brauer-Siegel theorem}

 We finally turn to the proof of the asymptotic estimate announced in Theorem \ref{theo.i.BS} of the introduction:

\begin{theo}\label{theo.BS} When $d\geq 2$ ranges through integers coprime to $q$,   one has the asymptotic estimate:
\begin{equation}\label{eq.BS}
\log\big( |\sha(E_d/K)|\cdot \Reg(E_d/K)\big) \sim \log H(E_d/K) \qquad(\text{as } d\to\infty). 
\end{equation}
\end{theo}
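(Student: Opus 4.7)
The plan is to combine, in a fairly direct way, three pieces already established in the paper: the full BSD formula for $E_d/K$ proved by Ulmer (Theorem \ref{theo.BSD}), the estimate on the special value proved in Section \ref{sec.spval.bnd} (Theorem \ref{theo.spval.Bnd}), and the elementary bound on the Tamagawa-number/torsion contribution recorded in Corollary \ref{coro.tamtors.Bnd}. The real work has already been done upstream; the present theorem is essentially a repackaging of those results as a Brauer--Siegel type asymptotic.

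First, Theorem \ref{theo.BSD} guarantees that $\sha(E_d/K)$ is finite for every integer $d\geq 2$ coprime to $q$, so the left-hand side of \eqref{eq.BS} is well defined, and moreover the BSD formula \eqref{eq.BSD} holds. Taking logarithms in \eqref{eq.BSD}, dividing by $\log H(E_d/K)$, and rearranging terms, is exactly the content of Corollary \ref{coro.link.spval.BS}, which gives
\[
\frac{\log\bigl(|\sha(E_d/K)|\cdot\Reg(E_d/K)\bigr)}{\log H(E_d/K)} = 1 + \frac{\log L^{\ast}(E_d/K,1)}{\log H(E_d/K)} + O_q\!\left(\frac{\log d}{d}\right),
\]
where the error term encodes the contribution of $\tam(E_d/K)\cdot q\cdot|E_d(K)\tors|^{-2}$ and is controlled via Corollary \ref{coro.tamtors.Bnd}. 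Since $H(E_d/K) = q^{\lfloor(d+2)/3\rfloor}$ grows linearly in $d$ on a logarithmic scale, the displayed error term tends to $0$ as $d\to\infty$.

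It remains to show that the middle term on the right also vanishes in the limit. But this is precisely the content of Theorem \ref{theo.spval.Bnd}: for any $\varepsilon\in(0,1/4)$, there are constants $C_1,C_2>0$ such that
\[
-C_1\left(\frac{\log\log d}{\log d}\right)^{1/4-\varepsilon} \;\leq\; \frac{\log L^{\ast}(E_d/K,1)}{\log H(E_d/K)} \;\leq\; C_2\,\frac{\log\log d}{\log d},
\]
and both bounding functions tend to $0$ as $d\to\infty$. Hence $\log L^{\ast}(E_d/K,1) = o(\log H(E_d/K))$. Plugging this into the decomposition above yields
\[
\frac{\log\bigl(|\sha(E_d/K)|\cdot\Reg(E_d/K)\bigr)}{\log H(E_d/K)} \;\longrightarrow\; 1 \qquad (d\to\infty),
\]
which is exactly the asymptotic \eqref{eq.BS}.

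There is no genuine new obstacle at this stage: the main technical difficulty of the whole chain lies in Theorem \ref{theo.spval.Bnd}, whose lower bound rests on the Stickelberger theorem on $p$-adic valuations of Jacobi sums together with an equidistribution statement for subgroups of $(\Z/d\Z)^{\times}$, imported through Theorem \ref{theo.bnd.spval}. Once that analytic input is accepted, together with Ulmer's BSD theorem and the elementary Corollary \ref{coro.tamtors.Bnd}, the proof of Theorem \ref{theo.BS} reduces to the one-line combination above.
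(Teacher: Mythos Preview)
Your proof is correct and follows essentially the same route as the paper's own proof: invoke Corollary \ref{coro.link.spval.BS} to reduce to controlling $\log L^\ast(E_d/K,1)/\log H(E_d/K)$, then apply the special-value bounds (Theorem \ref{theo.spval.Bnd}) to show this term is $o(1)$. The paper's write-up is slightly terser but the logical structure is identical.
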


\begin{proof}
 Given what has already been proved, the proof is almost clear: by Corollary \ref{coro.link.spval.BS}, we know that 
\[ \frac{\log\big(|\sha(E_d/K)|\cdot\Reg(E_d/K)\big)}{\log H(E_d/K)} 
= 1 + \frac{\log L^\ast(E_d/K, 1) }{\log H(E_d/K)} + O\left(\frac{\log d}{d}\right)
 \qquad (\text{as } d\to\infty).\]
Further, Theorem \ref{theo.bnd.spval} implies that, for all $\epsilon\in(0,1/4)$,  there exists a constant $C_5>0$ such that
\[ \left|  \frac{\log L^\ast(E_d/K, 1) }{\log H(E_d/K)}  \right| 
\leq C_5 \cdot \left(\frac{\log \log d}{\log d}\right)^{1/4-\epsilon} 
\qquad (\text{as } d\to\infty).\]
The concatenation of these two results thus yields that:
\[ \frac{\log\big(|\sha(E_d/K)|\cdot\Reg(E_d/K)\big)}{\log H(E_d/K)} = 1 +O\left(\left(\frac{\log \log d}{\log d}\right)^{1/4-\epsilon}\right) = 1+ o(1)  \qquad (\text{as }d\to\infty),\] 
where the implicit constant in the intermediate equality is effective and depends at most on $q$, $p$ and $\epsilon$. This is more than enough to prove Theorem \ref{theo.BS}.
\ProofEnd \end{proof}

\begin{rema}
In \cite{HP15}, Hindry and Pacheco 
suggested to investigate the asymptotic behaviour 
of the \emph{Brauer-Siegel ratio}
\[\BS(E/K) :=  \log\big( |\sha(E/K)|\cdot \Reg(E/K)\big)\big/\log H(E/K), \]
as $E$ runs through a family of non-isotrivial elliptic curves over $K$.
If $\ELL$ denotes the family of all such 
elliptic curves 
ordered by differential height, 
\cite[Coro. 1.13]{HP15} proves that 
\[ \textstyle 0\leq \liminf_{E\in\ELL} \BS(E/K) \leq \limsup_{E\in\ELL} \BS(E/K) = 1,\]
conditionally to the BSD conjecture for all $E\in\ELL$. 
In this terminology, Theorem \ref{theo.BS} above can be rephrased as follows: the   ratio $\BS(E_d/K)$ has a limit when $E_d$ ranges through the Hessian family of elliptic curves (with $d\to\infty$),
and this limit is $1$ (unconditionally).
\end{rema}

\paragraph{Acknowledgements} 
This article is based on results obtained  by the author in his PhD thesis \cite{Griffon_PHD}. 
He thanks his supervisor Marc Hindry for his guidance and his 
encouragements.
He also wishes to thank Douglas Ulmer and Michael Tsfasman for their interest in this work, for carefully reading a previous version, and for their helpful comments. 
The author is grateful to Universiteit Leiden for providing 
great working conditions during the writing of this article.


\newcommand{\mapolicebackref}[1]{\hspace*{-5pt}{\textcolor{olive!50}{\small$\uparrow$ #1}}}
\renewcommand*{\backref}[1]{
\mapolicebackref{#1}}
\hypersetup{linkcolor=olive!50}
\pdfbookmark[0]{References}{references} 
\addcontentsline{toc}{section}{References}

\bibliographystyle{alpha}
\bibliography{../../Biblio_GENERAL.bib}

\end{document}